\newtheorem{thm}{Theorem}[section]
\newtheorem{cor}[thm]{Corollary}
\newtheorem{lem}[thm]{Lemma}
\newtheorem{prop}[thm]{Proposition}
\theoremstyle{definition}
\newtheorem{defn}[thm]{Definition}
\newtheorem{eg}[thm]{Example}
\theoremstyle{remark}
\newtheorem{rem}[thm]{Remark}
\numberwithin{equation}{section}
\newcommand{\p}{\frak{p}}
\newcommand{\q}{\frak{q}}
\newcommand{\Hom}{\mathrm{\,Hom}}
\newcommand{\dlim}{{\displaystyle\lim_{\stackrel{\longrightarrow}{\scriptscriptstyle{n\in\Bbb{N}}}}}}
\newcommand{\Spec}{\mathrm{Spec}}
\newcommand{\Ann}{\mathrm{Ann}}
\newcommand{\Supp}{\mathrm{Supp}}
\newcommand{\rad}{\mathrm{rad}}
\newcommand{\Ext}{\mathrm{\, Ext}}
\begin{document}

\title[Prime Submodules And A Sheaf On The Prime Spectra Of Modules]{Prime Submodules And A Sheaf On The Prime Spectra Of Modules}%
\author[D. Hassanzadeh]{D. Hassanzadeh-lelekaami$^*$}%
\address{Department of Pure Mathematics, Faculty of Mathematical Sciences, University of Guilan, P. O. Box 41335-19141, Rasht, Iran.}%
\email{dhmath@guilan.ac.ir and lelekaami@gmail.com}%
\author[H. Roshan]{H. Roshan-Shekalgourabi}%
\address{Department of Pure Mathematics, Faculty of Mathematical Sciences, University of Guilan, P. O. Box 41335-19141, Rasht, Iran.}%
\email{hroshan@guilan.ac.ir  and hrsmath@gmail.com}%

\thanks{$^*$ Corresponding author}%
\subjclass[2010]{Primary 13C13, Secondary 13C99, 13D45, 14A25, 14A99}%
\keywords{Prime submodules, Zariski topology,  Sheaf of modules, Ideal transform modules, Local cohomology modules}%
\date{\today}%
\dedicatory{Dedicated to Professor Rahim Zaare Nahandi (University of Tehran)}%
\begin{abstract}
We define and investigate  a sheaf of modules on the prime spectra of   modules and it is shown that there is an isomorphism between the sections of this sheaf and the ideal transform module.
\end{abstract}
\maketitle

\section{INTRODUCTION}

Over the past several decades, a theory of prime submodules has developed which generalizes the notion of prime ideals in commutative rings. A fairly large number of articles have considered the extent to which properties of the prime ideals of a ring have counterparts for the prime submodules of a module. For example, in 1984, some basic properties of prime submodules were compiled \cite{lu84}. In 1986, McCasland and Moore adjusted the definition to the $M$-radical given in \cite[p. 37]{mm96}. Thus, began the extensive study of radical theory for modules, which has continued with the more recent work of many algebraists \cite{abbhass2012, lu89, lu07, man99, mm91, ps02}. In work beginning in the nineties, many authors have studied analogs of Krull's principal ideal theorem or/and developed a dimension theory for modules \cite{as99, gms94}. Other analogs for well-known theorems, such as Cohen's theorem and the prime avoidance theorem, were given in some papers \cite{lu84, lu97}. Also, some new classes of modules  such as weak multiplication modules, primeful modules, top modules and strongly top modules are defined by notion of prime submodule (see \cite{abbhass2012, az03, lu99, mms97}). The concept of prime submodule has led to  the development of topologies on the spectrum of modules. Topologies are considered by Duraivel, McCasland, Moore, Smith, and Lu  in \cite{dur94, lu99, mms97}. In the literature, there are many papers devoted to the Zariski topology on the spectrum of modules \cite{abbhass2012, ah10, ja11, behI08,  lu10, mms98}. It is well-known that Zariski topology on the spectrum of prime ideals of a ring is one of the main tools in algebraic geometry. The spectrum of a ring, equipped with a  sheaf of rings, is what Grothendieck called an affine scheme. However, many aspects of Zariski topology on the spectrum of prime submodules of a module is investigated, but there are some natural questions about this topology. For example, is it possible to define a sheaf of modules (or rings) on the prime spectra of modules similar to the the sheaf of rings $\mathcal{O}_{\Spec(R)}$ on the topological space $\Spec(R)$? Some explorations  to find a response to this question were done in \cite{tek05} and \cite{ah13}. Sheaf theory provides a language for the discussion of geometric objects of many different kinds. At present it finds its main applications in topology and in modern algebraic geometry, where it has been used with great success as a tool in the solution of several longstanding problems. This importance is the motivation  for us to define a  sheaf of modules $\mathcal{A}(N,M)$ on the topological space $\Spec(M)$, the set of all prime submodules of  $M$ equipped with Zariski topology (where $M$ and $N$ are two $R$-modules).

Section 2 presents some preliminaries. In Section 3  our main results are stated and proved. The results are illustrated by examples. In the Proposition~\ref{prop.stalk}, we are interested in finding the stalk of  the sheaf $\mathcal{A}(N,M)$. In the Lemma~\ref{firstlemma}, we examine the sheaf $\mathcal{A}(N,M)$ in case the ring $R$ is Noetherian. In order to find further results about the sheaf $\mathcal{A}(N, M)$, one of the major aims for this paper is a description of the sheaf $\mathcal{A}(N, M)$ in terms of the ideal transform module (see Theorem~\ref{mainloc}).

The local cohomology theory introduced by Grothendieck is a useful tool for attacking problems in commutative algebra and algebraic geometry. In the Corollary~\ref{corloc}, we show that there is an isomorphism between the right derived functors of the sheaf mentioned above  and the local cohomology modules. The theory of schemes is the foundation for algebraic geometry formulated by  Grothendieck and his many coworkers. We use this as motivation for the Theorem~\ref{scheme}, in which we will  introduce a scheme over the prime spectra of modules.

\section{PRELIMINARIES}

Throughout this paper, all rings are commutative with identity and all modules are unital. For a submodule $N$ of an $R$-module $M$, $(N :_R M)$ denotes the ideal $\{r\in R
\mid rM\subseteq N\}$ and \emph{annihilator} of $M$, denoted by $\Ann_R(M)$,
is the ideal $(\textbf{0}:_R M)$.  If there is no ambiguity we will consider $(N : M)$ (resp. $\Ann(M)$) instead of $(N :_R M)$ (resp. $\Ann_R(M)$). The $R$-module $M$ is called \emph{faithful} if $\Ann(M)=(0)$. A submodule $N$ of an $R$-module $M$ is said to be \emph{prime} if $N\neq M$ and
whenever $rm \in N$ (where $r\in R$ and $m \in M$) then $r\in (N
: M)$ or $m \in N$. If $N$ is prime, then   $\p = (N : M)$
is a prime ideal of $R$. In this case,  $N$ is said to be
\emph{$\p$-prime} (see \cite{lu84}). The concept of prime submodules of a module is studied by many algebraists, for example see \cite{abbhass2012, abu95, ja11, az03, Dau78,   lu95, lu99, lu03, lu07, lu10, mm92, mms97, mms98, mms02}. The set of all prime submodules of an $R$-module $M$ is called the \emph{prime spectrum} of $M$ and denoted by
$\Spec(M)$. Similarly, the collection of all $\p$-prime submodules
of an $R$-module $M$ for any $\p\in \Spec(R)$ is designated by $\Spec_{\p}(M)$. We
remark that $\Spec(\textbf{0}) = \emptyset$ and that $\Spec(M)$ may
be empty for some nonzero $R$-module $M$. For example,
$\mathbb{Z}_{p^{\infty}}$ as a $\mathbb{Z}$-module has no prime
submodule for any prime integer $p$ (see \cite{lu95}). Such a
module is said to be \emph{primeless}. \textbf{In the remainder of the paper, $M$ is a non-primeless $R$-module and  $X$ denotes the prime spectrum $\Spec(M)$ of $M$}. An $R$-module $M$ is called
\emph{primeful} if either $M =(\textbf{0})$ or $M\neq (\textbf{0})$ and
the map $\psi : \Spec(M)\rightarrow \Spec(R/\Ann(M))$ defined by
$\psi(P) = (P : M)/\Ann(M)$ for every $P\in \Spec(M)$, is
surjective (see \cite{lu07}).  Suppose that $M$ is an $R$-module.  For any submodule $N$ of $M$, $V(N)$ is defined as $\{P \in X \mid (P:M)\supseteq (N:M) \}$ (see \cite{lu99}). Set $\textsf{Z}(M)=\{ V(N)\, |\, N \leq M \}$. Then the elements of the set $\textsf{Z}(M)$ satisfy the axioms for closed sets in a topological space $X$ (see \cite{lu99}). The resulting topology due to $\textsf{Z}(M)$ is called the \emph{Zariski topology relative to $M$} and denoted by $\tau $.  Zariski topology on the spectrum of prime ideals of a ring is one of the main tools in algebraic geometry. In the literature, there are many different generalizations of the Zariski topology of rings to modules via prime submodules (see \cite{ah10, behI08, lu99, mms97}). We recall that for any element $r$ of a ring $R$, the set $D_r = \Spec(R)\backslash V(Rr)$ is open in $\Spec(R)$ and the family  $\{D_r |\, r\in R\}$ forms a base for the Zariski topology on $\Spec(R)$. Let M be an R-module. For each $r\in R$, we define $X_r = \Spec(M)\backslash V (rM)$. It is shown in \cite[Proposition 4.3]{lu99} that the set $\{X_r |\, r \in R\}$ forms a base for the Zariski topology on $(\Spec(M),\tau)$.

\begin{rem}
Let $N$ be an $R$-module. For an ideal $I$ of $R$ we recall that the \emph{$I$-torsion submodule of $N$} is $\Gamma_I(N)=\bigcup_{n\geq 1}(0:_N I^n)$ and $N$ is said to be \emph{$I$-torsion} if $N=\Gamma_I(N)$. The \emph{$i$-th local cohomology module of $N$ with respect to $I$} is defined as $$H^i_I(N) = \dlim \Ext^i_R(R/I^n, N).$$ The reader can refer to \cite{sb98} for the basic properties of local cohomology modules. We also recall that the \emph{ideal transform of $N$ with respect to $I$}, (or, alternatively, the $I$-transform of $N$) is defined as $$D_I(N)=\dlim \Hom_R(I^n, N).$$
\end{rem}

\begin{rem} \label{t00}(\cite[Theorem 6.1]{lu99})
The following statements are equivalent: (1) $(X,\tau)$ is a $T_0$-space; (2) The natural map $\psi: \Spec(M)\rightarrow \Spec(R/ \Ann(M))$ is injective; (3) If $V (P) = V (Q)$, then $P = Q$ for any $P,Q \in  \Spec(M)$; (4) $|\Spec_{\p}(M)|\leq 1$ for every $\p \in \Spec(R)$.\label{t00p}
\end{rem}

\begin{rem}
We recall that the \emph{Zariski radical} of a submodule $N$ of an $R$-module $M$, denoted by $z\rad(N)$, is the intersection of all members of $V(N)$, that is, $z\rad(N)=\cap_{P\in V(N)} P$ (see \cite[Definitions 1.3]{lu10}).
\end{rem}

\section{MAIN RESULTS}

First, we use  the concept of prime spectrum of an $R$-module $M$ to define a sheaf of modules on  $\Spec(M)$.  For every open subset $U$ of $X$ we set $\Supp(U)=\{(P:M)\mid P\in U\}$.

\begin{defn}
Let  $N$ be an $R$-module. We define the \emph{sheaf associated to $N$  relative to $M$}, denoted by $\mathcal{A}(N,M)$, as follows: for every open subset $U$ of $X$  we define $\mathcal{A}(N,M)(U)$ to be the set of all elements $(\gamma_\p)_{\p\in \Supp(U)}\in \coprod_{\p\in \Supp(U)} N_\p $ in which for each  $Q\in U$,  there exists an open neighborhood  $W\subseteq U$  of $Q$  and  $s\in R$, $m\in N$ such that, for each $P \in W$,  it is the case that   $s\not\in \p:=(P:M)$ and $\gamma_\p=m/s\in N_\p$.
\end{defn}

We denote the sheaf of rings $\mathcal{A}(R,M)$ by $\mathcal{A}_X$. We can define a map $\epsilon_N^U:N\longrightarrow \mathcal{A}(N,M)(U)$
 by $\epsilon_N^U(n)=(n/1)_{\p\in \Supp(U)}$ for all $n\in N$. We note that $\epsilon_N^U$ is an $R$-homomorphism. Now, let $U$ and $V$ be open subsets of $X$ with $\emptyset \neq V \subseteq U$, and let $\gamma=(\gamma_\p)_{\p\in \Supp(U)}\in \mathcal{A}(N,M)(U)$. Then it is clear that the restriction $(\gamma_\p)_{\p\in \Supp(V)}$ belongs to $\mathcal{A}(N,M)(V)$. We therefore have the restriction map
$\rho_{UV}:\mathcal{A}(N,M)(U)\longrightarrow \mathcal{A}(N,M)(V)$ for which
$\rho_{UV}(\gamma)=(\gamma_\p)_{\p\in \Supp(V)}$  for all $\gamma\in \mathcal{A}(N,M)(U)$. Of course, we define $\rho_{U\emptyset}:\mathcal{A}(N,M)(U)\longrightarrow \mathcal{A}(N,M)(\emptyset)=0$
to be the zero map. It is easy to see that  $\rho_{UV} \circ \epsilon_N^U=\epsilon_N^V$. We can make $\mathcal{A}(N,M)$ into a sheaf by using the restriction maps (as mentioned above). The reader can refer to \cite{ hart, Liu02, Tennison} for the basic properties of sheaves. Since another way to describe a sheaf is by its stalks, in the next proposition we find the stalk of the sheaf $\mathcal{A}(N,M)$.

\begin{prop}\label{prop.stalk}
Let  $N$ be an $R$-module.  For each $P\in X$, the stalk $\mathcal{A}(N,M)_{P}=\varinjlim_{P\in U}\mathcal{A}(N,M)(U)$ of the sheaf $\mathcal{A}(N,M)$ is isomorphic to $N_\p$, where $\p:=(P:M)$.
\end{prop}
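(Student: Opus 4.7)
The plan is to construct mutually inverse $R$-homomorphisms between $\mathcal{A}(N,M)_P$ and $N_\mathfrak{p}$ and to verify compatibility with the colimit structure. First I would define the natural map
\[
\varphi:\mathcal{A}(N,M)_P\longrightarrow N_\mathfrak{p},\qquad [\gamma,U]\longmapsto \gamma_\mathfrak{p},
\]
where $[\gamma,U]$ denotes the germ at $P$ of a section $\gamma=(\gamma_\mathfrak{q})_{\mathfrak{q}\in\Supp(U)}$ on an open neighborhood $U$ of $P$. Since the restriction maps of $\mathcal{A}(N,M)$ act by simply indexing the same tuple over a smaller set, $\varphi$ is well defined on germs, and it is visibly $R$-linear.

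Next I would verify surjectivity. Given $m/s\in N_\mathfrak{p}$ with $s\notin\mathfrak{p}=(P:M)$, the point $P$ belongs to the basic open set $X_s=X\setminus V(sM)$. For every $Q\in X_s$, the ideal $(Q:M)$ does not contain $s$, so $m/s$ makes sense in $N_{(Q:M)}$. Define the tuple $\gamma$ on $X_s$ by $\gamma_\mathfrak{q}=m/s$ for every $\mathfrak{q}\in\Supp(X_s)$. The single choice of the pair $(s,m)$ witnesses the local condition in the definition of $\mathcal{A}(N,M)(X_s)$ at every point of $X_s$, so $\gamma\in\mathcal{A}(N,M)(X_s)$, and by construction $\varphi([\gamma,X_s])=m/s$.

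The main obstacle, as in the classical case of $\Spec(R)$, is injectivity. Suppose $[\gamma,U]$ is a germ with $\varphi([\gamma,U])=\gamma_\mathfrak{p}=0$ in $N_\mathfrak{p}$. By the definition of $\mathcal{A}(N,M)$, after shrinking $U$ to some open neighborhood $W\subseteq U$ of $P$, we may assume there exist $s\in R$ and $m\in N$ with $s\notin(Q:M)$ and $\gamma_{(Q:M)}=m/s$ for all $Q\in W$. The hypothesis $m/s=0$ in $N_\mathfrak{p}$ produces $t\in R\setminus\mathfrak{p}$ with $tm=0$ in $N$. Since $t\notin(P:M)$, we have $P\in X_t$. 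On the open neighborhood $W\cap X_t$ of $P$, every point $Q$ satisfies $t\notin(Q:M)$, so the relation $tm=0$ forces $m/s=0$ in $N_{(Q:M)}$. Hence the restriction of $\gamma$ to $W\cap X_t$ is the zero section, and $[\gamma,U]=0$ in the stalk.

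Combining these three steps gives $\varphi$ is a bijective $R$-homomorphism, i.e.\ an isomorphism $\mathcal{A}(N,M)_P\cong N_\mathfrak{p}$. The delicate points to be careful about are: checking that the tuple defined on $X_s$ really meets the local representability condition (this relies on the fact that a single pair $(s,m)$ works uniformly on $X_s$), and the injectivity argument, where one must use the existence of $t\notin\mathfrak{p}$ to pass simultaneously to the open set $X_t$ and to invoke the vanishing in every residue localization $N_{(Q:M)}$ with $Q\in W\cap X_t$.
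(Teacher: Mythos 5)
Your proposal is correct and follows essentially the same route as the paper: the same map $\varphi$ sending a germ to its $\mathfrak{p}$-component, the same constant section on $X_s$ for surjectivity, and the same use of an annihilating element $t\notin\mathfrak{p}$ to kill the section on a smaller basic open set for injectivity. If anything, your choice of $W\cap X_t$ as the shrinking neighborhood in the injectivity step is slightly cleaner than the paper's $X_{fh}$, since it stays inside the set where the representation $\gamma_{(Q:M)}=m/s$ is known to hold.
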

\begin{proof}
Let $P$ be a $\p$-prime submodule of $M$ and $m\in \mathcal{A}(N,M)_P$. Then there exists a neighborhood $U$  of $P$ and $s=(s_\p)_{\p\in\Supp(U)}\in \mathcal{A}(N,M)(U)$ such that $s$ represents $m$. We define $\varphi:\mathcal{A}(N,M)_P\rightarrow N_{\p}$ by $\varphi(m)=s_\p$. Let $V$ be another neighborhood of $P$ and $t=(t_\p)_{\p\in\Supp(V)}\in \mathcal{A}(N,M)(V)$ such that $t$ also represents $m$. Then there exists an open subset $W\subseteq U\cap V$ such that $P\in W$ and $s|_W=t|_W$. Since $P\in W$, $s_\p=t_\p$. We claim that $\varphi$ is an isomorphism.

Let $x\in N_\p$. Then $x=a/f$ where $a\in N$ and $f\in R\setminus \p$. Since $f\not\in \p$, $P\in X_f$. Now we define $s_\q=a/f$ in $N_{\q}$ for all $Q\in X_f$, where $\q:=(Q:M)$. Then $s=(s_\q)_{\q\in\Supp(X_f)}\in \mathcal{A}(N,M)(X_f)$. If $m$ is the equivalent class of $s$ in $\mathcal{A}(N,M)_P$, then $\varphi(m)=x$. Hence $\varphi$ is surjective.

Now, let $m\in \mathcal{A}(N,M)_P$ and $\varphi(m)=0$. Let $U$ be an open neighborhood of $P$ and $s=(s_\p)_{\p\in\Supp(U)}\in \mathcal{A}(N,M)(U)$ such that $s$ represents $m$. There is an open neighborhood $V\subseteq U$ of $P$ and there are elements $a\in N$ and $f\in R$ such that for all $Q\in V$, it is the case that $f\not\in \q:=(Q:M)$ and $s_\q=a/f\in N_{\q}$. Thus $V\subseteq X_f$. Then $0=\varphi(m)=s_\p=a/f$ in $N_\p$. So, there is $h\in R\setminus \p$ such that $ha=0$. For $Q\in X_{fh}=X_f\cap X_h$ we have $s_\q=a/f\in N_\q$. Since $h\not\in \q$, $s_\q=\frac{a}{f}=\frac{h}{h}\frac{a}{f}=0$. Thus $s|_{X_{fh}}=0$. Therefore, $s=0$ in $\mathcal{A}(N,M)(X_{fh})$. Consequently $m=0$. This completes the proof.
\end{proof}

\begin{eg}
Consider the $\mathbb{Z}$-modules $M=(\mathbb{Z}/2\mathbb{Z})\times (\mathbb{Z}/3\mathbb{Z})$ and $N=\mathbb{Z}/14\mathbb{Z}$. It is clear that $\Spec(M)=\{2M, 3M\}$. By Proposition~\ref{prop.stalk}, we have $\mathcal{A}(\mathbb{Z}/14\mathbb{Z}, (\mathbb{Z}/2\mathbb{Z})\times (\mathbb{Z}/3\mathbb{Z}))_{2M}=(\mathbb{Z}/14\mathbb{Z})_{2\mathbb{Z}}$ and $\mathcal{A}(\mathbb{Z}/14\mathbb{Z}, (\mathbb{Z}/2\mathbb{Z})\times (\mathbb{Z}/3\mathbb{Z}))_{3M}=(0)$.
\end{eg}

\begin{cor}\label{cor.stalk1}
For each $P\in X$, the stalk $\mathcal{A}_{X, P}$ of the sheaf of rings $\mathcal{A}_X$ is isomorphic to $R_\p$, where $\p:=(P:M)$. Moreover, $(X, \mathcal{A}_{X})$  is a locally ringed space.
\end{cor}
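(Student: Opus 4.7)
The plan is to deduce both assertions directly from Proposition~\ref{prop.stalk}, with essentially no additional work required. First I would apply that proposition in the special case $N = R$: by construction, $\mathcal{A}_X = \mathcal{A}(R,M)$, so for each $P \in X$ the stalk satisfies
\[
\mathcal{A}_{X,P} \;=\; \mathcal{A}(R,M)_P \;\cong\; R_{\p},
\]
where $\p = (P:M)$. This gives the first claim immediately.

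For the second claim I would recall that a ringed space is called locally ringed precisely when every stalk is a local ring. Since $P$ is a prime submodule of $M$, it is $\p$-prime for some $\p \in \Spec(R)$, that is, $\p = (P:M)$ is a genuine prime ideal of $R$ (this is the standard fact recorded in the preliminaries). Consequently $R_\p$ is a local ring with unique maximal ideal $\p R_\p$. Combining this with the stalk computation above shows that every stalk of $\mathcal{A}_X$ is local, so $(X, \mathcal{A}_X)$ is a locally ringed space.

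There is essentially no obstacle here; the statement is a formal corollary of Proposition~\ref{prop.stalk} together with the definition of a locally ringed space. The only point worth verifying carefully is that the isomorphism produced by the proposition is an isomorphism of rings (and not merely of $R$-modules), but this is immediate from the explicit formula $\varphi(m) = s_{\p}$ used in the proof of Proposition~\ref{prop.stalk}, since multiplication in $\mathcal{A}_X(U)$ is defined componentwise and the representation of a germ by a pair $(a,f)$ with $f \notin \p$ is compatible with the ring structure on $R_{\p}$.
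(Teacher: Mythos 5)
Your proposal is correct and follows exactly the route the paper intends: the paper's entire proof is ``Use Proposition~\ref{prop.stalk},'' and you have simply spelled out the specialization $N=R$ together with the standard observation that $R_\p$ is local. The extra remark verifying that $\varphi$ is a ring (not merely module) isomorphism is a worthwhile detail the paper leaves implicit.
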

\begin{proof}
Use Proposition~\ref{prop.stalk}.
\end{proof}

In order to find further results about the sheaf $\mathcal{A}(N, M)$, the next major aim for this paper is a description of the sheaf $\mathcal{A}(N, M)$ in terms of the ideal transform module, and we now embark on the preparations for this result. Here, we present a lemma that shows the sheaf $\mathcal{A}(N, M)$ has a simple expression when $R$ is a Noetherian ring.

\begin{lem}\label{firstlemma}\label{20.1.8i}
Let $R$ be a Noetherian ring and  $N$ be an $R$-module. Let $M$ be a primeful $R$-module and $U:=X\setminus V(K)$, where $K\leq M$. Then for each $\gamma=(\gamma_\p)_{\p\in \Supp(U)}\in \mathcal{A}(N,M)(U)$, there exist $r\in \mathbb{N}$, $s_1, \ldots, s_r\in (K:M)$ and $m_1, \ldots, m_r\in N$ such that  $U=\bigcup_{i=1}^r X_{s_i}$ and $\gamma_\p=m_i/s_i$ for all $P\in X_{s_i}$ (for $i=1, \ldots, r$), where $\p:=(P:M)$.
\end{lem}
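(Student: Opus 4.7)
The plan is to cover $U$ by basic open sets $X_{s_Q}$ with $s_Q \in (K:M)$ on which the section $\gamma$ admits a uniform numerator/denominator representation, and then to extract a finite subcover using the Noetherian and primeful hypotheses via the natural map $\psi \colon X \to \Spec(R/\Ann(M))$.

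First I would carry out the local step: fix $Q \in U$. By the definition of $\mathcal{A}(N,M)(U)$, there is an open neighborhood $W \subseteq U$ of $Q$ together with $s \in R$ and $m \in N$ such that $s \notin \p := (P:M)$ and $\gamma_\p = m/s$ for every $P \in W$. Using that $\{X_r\}_{r \in R}$ is a base for $\tau$, choose $t \in R$ with $Q \in X_t \subseteq W$. Since $Q \in U = X \setminus V(K)$, one has $(K:M) \not\subseteq (Q:M)$, so pick $u \in (K:M)$ with $u \notin (Q:M)$. Set $s_Q := stu$ and $m_Q := mtu$. Then $s_Q \in (K:M)$, and primeness of $(Q:M)$ yields $Q \in X_{s_Q} = X_s \cap X_t \cap X_u \subseteq X_t \subseteq W$; for every $P \in X_{s_Q}$, none of $s,t,u$ lies in $\p$, so $m_Q/s_Q = (mtu)/(stu) = m/s = \gamma_\p$ in $N_\p$.

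The second step is to replace the resulting cover $U = \bigcup_{Q \in U} X_{s_Q}$ by a finite subcover. Because $M$ is primeful, the continuous map $\psi \colon X \to \Spec(R/\Ann(M))$, $P \mapsto (P:M)/\Ann(M)$, is surjective and satisfies $\psi^{-1}(\overline{D}_{\bar r}) = X_r$ and $\psi^{-1}(V_R(\bar I)) = V(IM)$, where bars denote reduction mod $\Ann(M)$. Hence $\psi(U) = \Spec(R/\Ann(M)) \setminus V_R(\overline{(K:M)})$ is open, and the basic opens $\overline{D}_{\overline{s_Q}}$ cover it. Since $R$, and therefore $R/\Ann(M)$, is Noetherian, $\Spec(R/\Ann(M))$ is a Noetherian topological space, so every open subset is quasi-compact. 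Extracting a finite subcover and pulling back through $\psi$ (using surjectivity) produces $U = \bigcup_{i=1}^r X_{s_{Q_i}}$. Renaming $s_i := s_{Q_i}$ and $m_i := m_{Q_i}$ gives the required data.

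The main obstacle is this finite-subcover step: passing from the (a priori infinite) cover produced by the sheaf axioms to a finite one requires a compactness argument, and both hypotheses (primeful and Noetherian) enter here. If one prefers to avoid explicit mention of $\psi$, an equivalent direct route is to set $J := \sum_{Q \in U} R\, s_Q \subseteq (K:M)$ and observe that the covering equality $\bigcup_Q X_{s_Q} = U$ translates, via the primeful hypothesis, into $\sqrt{(JM:M)} = \sqrt{(K:M)}$; finite generation of $(K:M)$ (from $R$ Noetherian) then forces a power of each generator to lie in a sub-ideal of $J$ generated by finitely many $s_Q$, and a short check confirms that the corresponding finite family $X_{s_{Q_1}}, \dots, X_{s_{Q_r}}$ still covers $U$.
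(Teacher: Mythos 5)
Your proof is correct, and it reaches the conclusion by a genuinely different arrangement of the two key steps. The paper first gets finiteness as a black box: it invokes quasi-compactness of $U$ (citing \cite[Proposition 3.24]{abbhass2012}) to obtain finitely many neighborhoods $W_j=X\setminus V(H_j)$ carrying representations $l_j/t_j$, then writes each $W_j$ as a finite union of basic opens $X_{h_{jf}}$ using finite generation of $(H_j:M)$, and only at the end forces the denominators into $(K:M)$ by deducing $\sqrt{(H_j:M)}\subseteq\sqrt{(K:M)}$ from $V(K)\subseteq V(H_j)$ via Zariski radicals \cite[Proposition 2.3]{lu10} and replacing $t_j$ by $t_jh_{jf}^{d}$ and $l_j$ by $l_jh_{jf}^{d}$. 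You reverse the order: you first arrange, pointwise, that the denominator already lies in $(K:M)$ by the cleaner device of multiplying numerator and denominator by an element $u\in (K:M)\setminus (Q:M)$ (together with shrinking to a basic open $X_t$), and only then extract a finite subcover, which you prove yourself by transporting the cover along $\psi$ to the Noetherian space $\Spec(R/\Ann(M))$. Your route is more self-contained --- it does not lean on the external quasi-compactness result and avoids the power-raising bookkeeping with $h_{jf}^d$ --- while the paper's route stays entirely inside $X$ and reuses its radical machinery. One small observation: surjectivity of $\psi$ is not actually needed in your subcover step, since $U=\psi^{-1}(T)$ for the open set $T=\bigcup_{Q}D_{\overline{s_Q}}$ of $\Spec(R/\Ann(M))$, and quasi-compactness of $T$ plus $\psi^{-1}(D_{\overline{s_Q}})=X_{s_Q}$ already yields $U=\bigcup_{i=1}^{r}X_{s_{Q_i}}$; your alternative ``direct'' route via $J=\sum_Q Rs_Q$ and finite generation of $(K:M)$ is likewise sound.
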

\begin{proof}
Since $R$ is Noetherian, $U$ is quasi-compact by \cite[Proposition 3.24]{abbhass2012}. Thus there exist  $n\in \mathbb{N}$, open subsets $W_1, \ldots, W_n$ of $U$, $t_1, \ldots, t_n\in R$, $l_1, \ldots, l_n\in N$ such that $$U=\bigcup_{j=1}^n W_j$$ and for each $j=1, \ldots, n$ and $P\in W_j$ we have $t_j\not\in \p:=(P:M)$ and $\gamma_\p=l_j/t_j$. Fix $j\in \{1, \ldots, n\}$. Since $W_j$ is an open subset of $U$, there is a submodule $H_j$ of $M$ such that $W_j=X\setminus V(H_j)$. Now, $W_j\subseteq U=X\setminus V(K)$, and so $V(K)\subseteq V(H_j)$. Since $R$ is Noetherian, $$(H_j:M)=Rh_{j1}+ \cdots+ Rh_{jn_j}$$ for some $h_{j1}, \ldots, h_{jn_j}\in R$. This implies that
      \begin{eqnarray*}
        W_j &=& X\setminus V(H_j) \\
         &=& X\setminus V((H_j:M)M) \text{ by \cite[Result 3]{lu99}}\\
         &=& X\setminus V((Rh_{j1}+ \cdots+ Rh_{jn_j})M) \\
         &=&  X\setminus \bigcap_{f=1}^{n_j} V(h_{jf}M) \\
         &=& \bigcup_{f=1}^{n_j} X_{h_{jf}}.
      \end{eqnarray*}
      On the other hand, we have
      \begin{eqnarray*}
        V(K)\subseteq V(H_j) &\Rightarrow& z\rad(H_j)\subseteq z\rad(K) \\
         &\Rightarrow& (z\rad(H_j):M)\subseteq (z\rad(K):M) \\
         &\Rightarrow& \sqrt{(H_j:M)}\subseteq \sqrt{(K:M)} \text{ by \cite[Propositin 2.3]{lu10}}.
      \end{eqnarray*}
Hence, there exists $d\in \mathbb{N}$ such that $$(H_j:M)^d\subseteq \left(\sqrt{(H_j:M)}\right)^d\subseteq \left(\sqrt{(K:M)}\right)^d\subseteq (K:M).$$ It follows that $h_{jf}^d\in (K:M)$, for each $f\in \{1, \ldots, n_j\}$. This means that for each $f\in \{1, \ldots, n_j\}$ and for each $P\in X_{h_{jf}}$, we have $t_jh_{jf}^d\not\in (P:M)$. We deduce that  $$X_{h_{jf}}=X\setminus V(h_{jf}M)=X\setminus V(t_jh_{jf}^dM)=X_{t_jh_{jf}^d},$$ and we can write $$\gamma_{_{(P:M)}}=\frac{l_j}{t_j}=\frac{l_jh_{jf}^d}{t_jh_{jf}^d}\in N_{(P:M)}.$$  We have therefore only to relabel the pairs $(t_jh_{jf}^d, l_jh_{jf}^d)\in (K:M)\times M$ ($f=1, \ldots, n_j, j=1, \ldots, n$) in order to complete the proof.
\end{proof}

\begin{lem}\label{gammazero}\label{20.1.8ii}
Let $N$ be an $R$-module. Set $U:=X\setminus V(K)$, where $K\leq M$.  Then $\Gamma_{(K:M)}(\mathcal{A}(N,M)(U))=0$.
\end{lem}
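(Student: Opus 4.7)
The plan is to unpack the definition of $(K:M)$-torsion directly and use the defining condition $U = X \setminus V(K)$ to force each local component of a torsion section to vanish.

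First, I would take an arbitrary $\gamma = (\gamma_\p)_{\p \in \Supp(U)} \in \Gamma_{(K:M)}(\mathcal{A}(N,M)(U))$. By definition of the torsion submodule, there exists some integer $n \geq 1$ with $(K:M)^n \gamma = 0$ in $\mathcal{A}(N,M)(U)$. The goal is to show that $\gamma_\p = 0$ in $N_\p$ for every $\p \in \Supp(U)$, since equality of sections of $\mathcal{A}(N,M)$ is checked componentwise.

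Next, I would fix $P \in U$ and set $\p := (P : M)$. The key geometric observation is that $P \in U = X \setminus V(K)$ translates, by the very definition of $V(K)$, into $(K : M) \not\subseteq \p$. Hence I can choose $s \in (K:M)$ with $s \notin \p$. Then $s^n \in (K:M)^n$, and applying the torsion relation componentwise gives $s^n \gamma_\p = 0$ in $N_\p$. Since $s \notin \p$ also $s^n \notin \p$, so $s^n/1$ is a unit in $R_\p$; multiplying by its inverse yields $\gamma_\p = 0$. As $P$ was arbitrary in $U$, the section $\gamma$ is zero.

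There is really no serious obstacle here; the only point that requires a moment of care is making sure that $s^n \gamma = 0$ as a section translates into $s^n \gamma_\p = 0$ in the stalk component $N_\p$, which is immediate from the fact that the $R$-module structure on $\mathcal{A}(N,M)(U) \subseteq \coprod_{\p \in \Supp(U)} N_\p$ is defined coordinatewise via the canonical maps $R \to R_\p$. Everything else is a direct application of the characterization of $U$ in terms of $(K:M)$ and of $\p$.
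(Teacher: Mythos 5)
Your proposal is correct and follows essentially the same argument as the paper: both reduce to the observation that $P\in X\setminus V(K)$ means $(K:M)\not\subseteq\p$, pick an element of $(K:M)$ (or of $(K:M)^n$, which is the paper's minor variant) outside $\p$, and invert it in $R_\p$ to kill $\gamma_\p$ componentwise. There is no substantive difference between choosing $s\in(K:M)\setminus\p$ and using $s^n$, as you do, versus choosing $s_\p\in(K:M)^n\setminus\p$ directly, as the paper does.
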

\begin{proof}
Let $\gamma=(\gamma_\p)_{\p\in \Supp(U)}\in \Gamma_{(K:M)}(\mathcal{A}(N,M)(U))$. There exists $n\in \mathbb{N}$ such that $(K:M)^n\gamma=0$. Consider $\p\in \Supp(U)$. Thus, $(K:M)^n\not\subseteq \p$. So, there exists $s_\p \in (K:M)^n\setminus \p$. Since $s_\p\gamma=0$, we have $s_\p\gamma_\p=0$ and so $$\gamma_\p=\frac{1}{s_\p}(s_\p\gamma_\p)=0\in N_\p.$$ Hence, $\gamma=0$.
\end{proof}

\begin{eg}
Consider the $\mathbb{Z}$-module $M=\mathbb{Z}\oplus (\mathbb{Z}/64\mathbb{Z})$. Let $U=\Spec(M)\setminus V(\mathbb{Z}\oplus (0))$ and $N$ be an arbitrary $\mathbb{Z}$-module. Then by Lemma~\ref{gammazero}, $\Gamma_{I}(\mathcal{A}(N, \mathbb{Z}\oplus (\mathbb{Z}/64\mathbb{Z}))(U))=0$, for any ideal $I$ of $\mathbb{Z}$ with $I\supseteq 64\mathbb{Z}$.
\end{eg}

\begin{prop}\label{kernel}\label{20.1.8iii}
Let $R$ be a Noetherian ring, $M$ be a faithful  primeful $R$-module and $N$ be an $R$-module. Set $U=X\setminus V(K)$, where $K\leq M$.  Then $\Gamma_{(K:M)}(N)=\ker (\epsilon_N^U)$, and so $\ker (\epsilon_N^U)$ is $(K:M)$-torsion.
\end{prop}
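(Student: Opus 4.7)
The statement splits into two inclusions, and essentially all of the work is in the inclusion $\ker(\epsilon_N^U) \subseteq \Gamma_{(K:M)}(N)$.

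The easy direction is $\Gamma_{(K:M)}(N) \subseteq \ker(\epsilon_N^U)$, which I would deduce immediately from Lemma~\ref{gammazero}. Indeed, if $n \in \Gamma_{(K:M)}(N)$, then $(K:M)^t n = 0$ for some $t \in \mathbb{N}$, and since $\epsilon_N^U$ is $R$-linear we have $(K:M)^t \epsilon_N^U(n) = 0$. Thus $\epsilon_N^U(n)$ lies in $\Gamma_{(K:M)}(\mathcal{A}(N,M)(U))$, which vanishes by Lemma~\ref{gammazero}.

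For the harder inclusion, the plan is to first pin down $\Supp(U)$ as a concrete subset of $\Spec(R)$. Writing $I := (K:M)$, a direct unwinding of the definitions shows $V(K) = \psi^{-1}(V(I))$, where $\psi \colon X \to \Spec(R/\Ann(M))$ is the natural map. Because $M$ is faithful we may identify $\Spec(R/\Ann(M))$ with $\Spec(R)$, and because $M$ is primeful the map $\psi$ is surjective. Consequently $\Supp(U) = \psi(U) = \Spec(R)\setminus V(I)$.

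Now suppose $n \in \ker(\epsilon_N^U)$. Then $n/1 = 0$ in $N_\p$ for every $\p \in \Supp(U)$, which forces $\Ann(n) \not\subseteq \p$ for each such $\p$. Contrapositively, every $\p \in \Spec(R)$ containing $\Ann(n)$ must lie in $V(I)$, i.e. $V(\Ann(n)) \subseteq V(I)$ in $\Spec(R)$. Using $\sqrt{J} = \bigcap_{\p \supseteq J}\p$, this gives $I \subseteq \sqrt{I} \subseteq \sqrt{\Ann(n)}$. Since $R$ is Noetherian, $I$ is finitely generated, so a standard argument produces $t \in \mathbb{N}$ with $I^t \subseteq \Ann(n)$. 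Hence $(K:M)^t n = 0$, showing $n \in \Gamma_{(K:M)}(N)$.

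The conclusion $\ker(\epsilon_N^U)$ is $(K:M)$-torsion is then immediate from $\ker(\epsilon_N^U) = \Gamma_{(K:M)}(N)$. The main obstacle I anticipate is the identification $\Supp(U) = \Spec(R)\setminus V((K:M))$: this is where both the faithfulness hypothesis (to eliminate $\Ann(M)$) and the primeful hypothesis (to ensure surjectivity of $\psi$) are genuinely used. Once that identification is in hand, the remaining steps are routine Nullstellensatz plus Noetherian finite generation.
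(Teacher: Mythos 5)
Your proposal is correct and follows essentially the same route as the paper: the easy inclusion via Lemma~\ref{gammazero}, and the hard inclusion by using faithfulness and primefulness to identify the primes in $\Supp(U)$ with $\Spec(R)\setminus V((K:M))$, then concluding $(K:M)\subseteq\sqrt{\Ann(n)}$ and invoking Noetherianness to extract a power of $(K:M)$ killing $n$. The only cosmetic difference is that you work directly with $\Ann(n)$, whereas the paper forms the subideal $J=\sum_{\p\in\Supp(U)}Rt_\p$ generated by chosen annihilating elements and argues $V(J)\subseteq V((K:M))$ by contradiction; the two are interchangeable.
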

\begin{proof}
Since $\epsilon_N^U:N\longrightarrow \mathcal{A}(N,M)(U)$ is an $R$-homomorphism it follows  from Lemma~\ref{gammazero} that $$\epsilon_N^U(\Gamma_{(K:M)}(N))\subseteq \Gamma_{(K:M)}(\mathcal{A}(N,M)(U))=0,$$  and so $\Gamma_{(K:M)}(N)\subseteq \ker (\epsilon_N^U)$.

Now, suppose that $m\in \ker (\epsilon_N^U)$. Then $\frac{m}{1}=0 \in N_\p$ for all $\p\in \Supp(U)$. So, for each $\p\in \Supp(U)$ there is $t_\p\in R\setminus \p$ such that $t_\p m=0$. Put $$J:=\sum_{\p\in \Supp(U)}Rt_\p.$$ Hence, $Jm=0$.

Let $\q\in V(J)$. We claim that $(K:M)\subseteq \q$. On the contrary, suppose that  $(K:M)\nsubseteq \q$. Since $M$ is faithful primeful, there is a prime submodule $Q$ such that $(Q:M)=\q$. Therefore, $Q\in U$ and $\q\in \Supp(U)$. This yields that $t_\q\in J\subseteq \q$. This contradicts the fact that $t_\q\in R\setminus \q$.  Thus, $$(K:M)\subseteq \sqrt{(K:M)}=\bigcap_{\p\in V((K:M))}\p\subseteq \bigcap_{\p\in V(J)}\p=\sqrt{J}.$$ Since $R$ is Noetherian, $(K:M)^n\subseteq J$  for some integer $n\in \mathbb{N}$. Hence, $(K:M)^n m\subseteq Jm=0$. This implies that $m\in \Gamma_{(K:M)}(N)$. This completes the proof.
\end{proof}

\begin{eg}
Consider the $\mathbb{Z}$-modules $N=(\mathbb{Z}/2\mathbb{Z})\times (\mathbb{Z}/3\mathbb{Z})\times (\mathbb{Z}/7\mathbb{Z})$ and $M= \mathbb{Z} \oplus \mathbb{Z}_{p^\infty}$, where $p$ is a prime integer. Let $q$ be a prime integer. Then $qM=q\mathbb{Z} \oplus \mathbb{Z}_{p^\infty}\neq M$. Hence, by \cite[Proposition 2]{lu84}, $qM$ is a $(q)$-prime submodule of $M$. Indeed, $qM$ is a maximal submodule of $M$ and it is easy to see that $qM$ is the only  $(q)$-prime submodule of $M$. On the other hand, the torsion submodule $T(M)$ of $M$ is $(0)\oplus \mathbb{Z}_{p^\infty}$. Hence, by \cite[Lemma 4.5]{lu03}, $T(M)$ is a $(0)$-prime submodule of $M$. Therefore, $\Spec(M)=\{qM \,| \, q \text{ is a prime integer}\}\cup \{(0)\oplus \mathbb{Z}_{p^\infty}\}$. This yields that $M$ is primeful. Put $U=\Spec(M)\setminus V(K)$, where $K=30\mathbb{Z} \oplus \mathbb{Z}_{p^\infty}$ is a submodule of $M$. Then by Proposition~\ref{kernel}, $\ker (\epsilon_N^U)=(\mathbb{Z}/2\mathbb{Z})\times (\mathbb{Z}/3\mathbb{Z})$.
\end{eg}

\textbf{In the remainder of the paper we assume that $M$ is a faithful primeful $R$-module.
}
\begin{prop}\label{2019}
Let $R$ be a Noetherian ring and $N$ be an $R$-module. Set $U=X\setminus V(K)$, where $K\leq M$. Let $W$ be an open subset of $X$ such that $U\subseteq W$. Consider the restriction map  $\rho_{WU}:\mathcal{A}(N,M)(W)\longrightarrow \mathcal{A}(N,M)(U)$. Then $\ker (\rho_{WU})=\Gamma_{(K:M)}(\mathcal{A}(N,M)(W))$.
\end{prop}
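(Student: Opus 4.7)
The plan is to prove the two inclusions separately. The easy inclusion $\Gamma_{(K:M)}(\mathcal{A}(N,M)(W)) \subseteq \ker(\rho_{WU})$ will follow directly from Lemma~\ref{gammazero}: since $\rho_{WU}$ is $R$-linear, any $\gamma$ killed by some power of $(K:M)$ maps into $\Gamma_{(K:M)}(\mathcal{A}(N,M)(U))$, which is zero.

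For the reverse inclusion I would fix $\gamma \in \ker(\rho_{WU})$, write $W = X\setminus V(L)$ for some submodule $L \leq M$, and apply Lemma~\ref{firstlemma} to obtain a finite open cover $W = \bigcup_{i=1}^r X_{s_i}$ and elements $m_i \in N$ with $\gamma_\p = m_i/s_i$ for every $P \in X_{s_i}$ (where $\p := (P:M)$). The vanishing $\rho_{WU}(\gamma) = 0$ then forces $m_i/s_i = 0$ in $N_\p$ whenever $P \in X_{s_i} \cap U$, and hence $\Ann_R(m_i) \not\subseteq \p$ for all such $P$.

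The main obstacle is upgrading this pointwise information into a uniform annihilator, and this is where the argument will mirror Proposition~\ref{kernel}. Because $M$ is faithful and primeful, the map $P \mapsto (P:M)$ surjects onto $\Spec(R)$, so the previous observation transfers to $\Spec(R)$ as: for every $\p \in \Spec(R)$, if $s_i \notin \p$ and $(K:M) \not\subseteq \p$, then $\Ann_R(m_i) \not\subseteq \p$. Contrapositively, $V(\Ann_R(m_i)) \subseteq V(s_i) \cup V((K:M)) = V(s_i(K:M))$ in $\Spec(R)$, whence $\sqrt{s_i(K:M)} \subseteq \sqrt{\Ann_R(m_i)}$. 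The Noetherian hypothesis on $R$ then produces $n_i \in \mathbb{N}$ with $s_i^{n_i}(K:M)^{n_i} m_i = 0$.

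To finish, I would set $n := \max\{n_1, \ldots, n_r\}$ and check that $(K:M)^n \gamma = 0$ in $\mathcal{A}(N,M)(W)$. For an arbitrary $P \in W$ pick $i$ with $P \in X_{s_i}$; then for $a \in (K:M)^n \subseteq (K:M)^{n_i}$ one has $s_i^{n_i}(a m_i) = 0$, and since $s_i^{n_i} \notin \p$, this gives $a\gamma_\p = a m_i/s_i = 0$ in $N_\p$. Thus $\gamma \in \Gamma_{(K:M)}(\mathcal{A}(N,M)(W))$, which completes the argument.
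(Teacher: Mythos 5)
Your proof is correct, and its skeleton is the same as the paper's: the easy inclusion via Lemma~\ref{gammazero}, then Lemma~\ref{firstlemma} to cover $W$ by finitely many $X_{s_i}$ on which $\gamma$ is $m_i/s_i$, then a uniform power of $(K:M)$ killing $\gamma$. The one place you diverge is in producing that uniform power. The paper observes that $U\cap X_{s_i}=X\setminus V(K\cap s_iM)$ and that $\epsilon_N^{U\cap X_{s_i}}(m_i)=0$, so Proposition~\ref{kernel} immediately gives $(K\cap s_iM:M)^{h_i}m_i=0$, after which the containment $(K:M)^h(s_iM:M)^h\subseteq (K\cap s_iM:M)^h$ finishes the computation. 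You instead re-derive this annihilation from scratch: using that $M$ is faithful and primeful you transfer the pointwise vanishing to the containment $V(\Ann_R(m_i))\subseteq V\bigl(s_i(K:M)\bigr)$ in $\Spec(R)$, and the Noetherian hypothesis yields $s_i^{n_i}(K:M)^{n_i}m_i=0$. This is exactly the mechanism inside the proof of Proposition~\ref{kernel}, so your argument is self-contained but duplicates work the paper has already packaged; citing Proposition~\ref{kernel} on the open set $U\cap X_{s_i}$ would shorten your middle step to one line. All the small verifications you rely on (e.g.\ $P\in X_{s_i}$ iff $s_i\notin(P:M)$, and the surjectivity of $\psi$ for faithful primeful $M$) are sound, so there is no gap.
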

\begin{proof}
By Lemma \ref{gammazero}, $$\rho_{WU}(\Gamma_{(K:M)}(\mathcal{A}(N,M)(W)))\subseteq \Gamma_{(K:M)}(\mathcal{A}(N,M)(U))=0.$$  So, $\Gamma_{(K:M)}(\mathcal{A}(N,M)(W))\subseteq \ker (\rho_{WU})$.

There exists $L\leq M$ such that  $W=X\setminus V(L)$. Let $\gamma=(\gamma_\p)_{\p\in \Supp(W)}\in \ker (\rho_{WU})$. By Lemma \ref{firstlemma}, there exist $r\in \mathbb{N}$, $s_1, \ldots, s_r\in (L:M)$ and $m_1, \ldots, m_r\in N$ such that  $W=\bigcup_{i=1}^r X_{s_i}$ and for each $i=1, \ldots, r$ and each $P\in X_{s_i}$, we have $\gamma_{_{(P:M)}}=m_i/s_i$. Since $\gamma \in \ker (\rho_{WU})$, we have $\gamma_\p=0$ for all $\p \in \Supp(U)$. Fix $i\in \{1, \ldots, r\}$. Set $U'=U\cap X_{s_i}$. Hence,
\begin{eqnarray*}
  U' &=& (X\setminus V(K))\cap(X\setminus V(s_iM)) \\
   &=& X\setminus (V(K)\cup V(s_iM)) \\
   &=& X\setminus V(K\cap s_iM).
\end{eqnarray*}
Then $\frac{m_i}{s_i}=0\in N_\p$ for all $\p\in \Supp(U')$. This means that $\epsilon_N^{U'}(m_i)=0$, so that there exists $h_i\in \mathbb{N}$ such that $(K\cap s_iM:M)^{h_i}m_i=0$ by Proposition \ref{kernel}. Let $h:=\max \{h_1, h_2, \ldots, h_r\} $. Now, let $\p\in \Supp(W)$. There exists $i\in \{1, \ldots, r\}$ with $\p\in \Supp(X_{s_i})$. Let $d\in (K:M)^h$. Since $s_i^h\in (s_iM:M)^h$, we have $$ds_i^h\in (K:M)^h(s_iM:M)^h\subseteq ((K:M)\cap(s_iM:M))^h=(K\cap s_iM:M)^h.$$ Therefore, we deduce that $$d\gamma_\p=\frac{dm_i}{s_i}=\frac{ds_i^hm_i}{s_i^{h+1}}=0\in N_\p,$$  for all $d\in (K:M)^h$. This implies that $(K:M)^h\gamma=0$ and $\gamma \in \Gamma_{(K:M)}(\mathcal{A}(N,M)(W))$.
\end{proof}

\begin{prop}\label{20.1.10}
Let $R$ be a Noetherian ring and $N$ be an $R$-module. Set $U=X\setminus V(K)$, where $K\leq M$. Then $\epsilon_N^U:N\longrightarrow \mathcal{A}(N,M)(U)$ has a $(K:M)$-torsion cokernel.
\end{prop}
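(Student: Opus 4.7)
The plan is to show, for arbitrary $\gamma\in\mathcal{A}(N,M)(U)$, that some power of $(K:M)$ sends $\gamma$ into $\operatorname{Im}(\epsilon_N^U)$; this is exactly the statement that $\operatorname{coker}\epsilon_N^U$ is $(K:M)$-torsion. First I would apply Lemma~\ref{firstlemma} to represent $\gamma$ by finite data $s_1,\ldots,s_r\in(K:M)$ and $m_1,\ldots,m_r\in N$ with $U=\bigcup_{i=1}^r X_{s_i}$ and $\gamma_\p=m_i/s_i$ on $X_{s_i}$. On each overlap $X_{s_is_j}=X_{s_i}\cap X_{s_j}$ both descriptions equal $\gamma_\p$, so $s_jm_i-s_im_j$ lies in $\ker\epsilon_N^{X_{s_is_j}}=\Gamma_{(s_is_jM:M)}(N)$ by Proposition~\ref{kernel}. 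Choosing one exponent $n$ good for all finitely many pairs and using $s_is_j\in(s_is_jM:M)$, I deduce $(s_is_j)^n(s_jm_i-s_im_j)=0$ in $N$. The crucial simplification is then to replace $s_i$ by $s_i^{n+1}$ and $m_i$ by $s_i^nm_i$: since $(P:M)$ is prime, $X_{s_i^{n+1}}=X_{s_i}$, the formula $\gamma_\p=m_i/s_i$ persists, and the overlap relation sharpens to the genuine identity $s_jm_i=s_im_j$ in $N$.

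Next I would put $J:=Rs_1+\cdots+Rs_r$ (with the updated $s_i$'s) and prove $(K:M)\subseteq\sqrt{J}$. From $U=\bigcup X_{s_i}$ one gets $V(K)=\bigcap_i V(s_iM)$, and a direct check (using $s_i\in(s_iM:M)$ in one direction, and $s_iM\subseteq(P:M)M\subseteq P$ in the other) gives $V(K)=\{P\in\Spec(M):J\subseteq(P:M)\}$. Here the faithful/primeful hypothesis enters decisively: the surjectivity of $\psi:\Spec(M)\to\Spec(R)$ says that every prime $\p$ of $R$ containing $J$ arises as $(P:M)$ with $P\in V(K)$, and hence contains $(K:M)$. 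Intersecting over such $\p$ yields $(K:M)\subseteq\sqrt{J}$, and $R$ Noetherian upgrades this to $(K:M)^t\subseteq J$ for some $t\in\mathbb{N}$.

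Finally, the identity $s_jm_i=s_im_j$ in $N$ implies, at every $\p\in\Supp(X_{s_j})$, that $s_i\gamma_\p=s_im_j/s_j=s_jm_i/s_j=m_i/1=\epsilon_N^U(m_i)_\p$, so $s_i\gamma=\epsilon_N^U(m_i)$ globally in $\mathcal{A}(N,M)(U)$. For any $b\in(K:M)^t\subseteq J$, writing $b=\sum_i d_is_i$ yields $b\gamma=\sum_i d_i(s_i\gamma)=\epsilon_N^U\!\bigl(\sum_i d_im_i\bigr)\in\operatorname{Im}(\epsilon_N^U)$, so $(K:M)^t\gamma\subseteq\operatorname{Im}(\epsilon_N^U)$ and $\operatorname{coker}\epsilon_N^U$ is $(K:M)$-torsion. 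I expect the main obstacle to be the bookkeeping in the substitution step that upgrades the local compatibility on overlaps to the exact equation $s_jm_i=s_im_j$ in $N$, together with the correct use of the faithful/primeful hypothesis (via surjectivity of $\psi$) to extract $(K:M)\subseteq\sqrt{J}$ from the covering relation.
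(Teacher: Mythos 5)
Your proof is correct, and it shares the paper's overall skeleton --- invoke Lemma~\ref{firstlemma} to get the finite cover $U=\bigcup_i X_{s_i}$ with $s_i\in(K:M)$, show that each $s_i$ (up to a power) multiplies $\gamma$ into $\operatorname{Im}(\epsilon_N^U)$, and then use the faithful/primeful surjectivity of $\psi$ plus Noetherianness to get $(K:M)^h\subseteq \sum_i Rs_i^{n}$ --- but the mechanism for the middle step is genuinely different. The paper restricts $s_i\gamma-\epsilon_N^U(m_i)$ to $X_{s_i}$, observes it dies there, and invokes Proposition~\ref{2019} (the identification $\ker\rho_{UX_{s_i}}=\Gamma_{(s_iM:M)}(\mathcal{A}(N,M)(U))$) to conclude that $s_i^{n_i}$ kills the difference; this is quick but leans on a result proved specifically for this purpose. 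You instead never touch the restriction maps: you compare the local representatives on the pairwise overlaps $X_{s_is_j}$, apply Proposition~\ref{kernel} to get $(s_is_j)^{n}(s_jm_i-s_im_j)=0$, and then perform the classical renormalization $s_i\mapsto s_i^{n+1}$, $m_i\mapsto s_i^nm_i$ to upgrade this to the exact identity $s_jm_i=s_im_j$ in $N$ --- precisely the argument the paper itself deploys later in Proposition~\ref{prop.mainprop} for $\mathcal{A}(N,M)(X_f)\cong N_f$. What your route buys is independence from Proposition~\ref{2019} and the cleaner conclusion $s_i\gamma=\epsilon_N^U(m_i)$ on the nose (no residual power of $s_i$), at the cost of the bookkeeping in the renormalization; one small point worth stating explicitly is that the new $s_i^{n+1}$ still lie in $(K:M)$, which is needed for the final containment $(K:M)\subseteq\sqrt{J}$ to be the relevant one (it holds since $(K:M)$ is an ideal). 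Both arguments are valid.
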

\begin{proof}
Let $\gamma=(\gamma_\p)_{\p\in \Supp(U)}\in \mathcal{A}(N,M)(U)$. By Lemma \ref{firstlemma}, there exist $r\in \mathbb{N}$, $s_1, \ldots, s_r\in (K:M)$ and $m_1, \ldots, m_r\in N$ such that  $U=\bigcup_{i=1}^r X_{s_i}$ and for each $i=1, \ldots, r$ and each $P\in X_{s_i}$, we have $\gamma_{_{(P:M)}}=m_i/s_i$. Fix $i\in \{1, \ldots, r\}$. Then for each $P\in X_{s_i}$, $$s_i\gamma_{_{(P:M)}}=\frac{s_im_i}{s_i}\in N_{_{(P:M)}}.$$ This means that $$\rho _{UX_{s_i}}(s_i\gamma)=s_i\rho _{UX_{s_i}}(\gamma)=\epsilon_N^{X_{s_i}}(m_i)=\rho _{UX_{s_i}}(\epsilon_N^U(m_i)).$$ Thus, $s_i\gamma-\epsilon_N^U(m_i)\in \ker (\rho _{UX_{s_i}})=\Gamma _{(s_iM:M)}(\mathcal{A}(N:M)(U))$ by Proposition~\ref{2019}. Hence, there exists $n_i\in \mathbb{N}$ such that $(s_iM:M)^{n_i}(s_i\gamma-\epsilon_N^U(m_i))=0$. Then $s_i^{n_i}(s_i\gamma-\epsilon_N^U(m_i))=0$. Define $n:=\max\{n_1, n_2, \ldots, n_r\}+1$. Then for all $i=1, \ldots, r$, we have $$s_i^n\gamma=s_i^{n-1-n_i}s_i^{n_i}s_i\gamma=s_i^{n-1-n_i}s_i^{n_i}\epsilon_N^U(m_i)=\epsilon_N^U(s_i^{n-1}m_i)\in \epsilon_N^U(N).$$ Note that $$X\setminus V\left(\left(\sum_{i=1}^r Rs_i^n\right)M\right)=X\setminus V\left(\left(\sum_{i=1}^r Rs_i\right)M\right)=\bigcup_{i=1}^r X_{s_i}=U=X\setminus V(K).$$ This implies that $V((\sum_{i=1}^r Rs_i^n)M)=V(K)$. Since $M$ is faithful  primeful, it follows that $V(\sum_{i=1}^r Rs_i^n)\subseteq V((K:M))$. Hence, $$(K:M)\subseteq \sqrt{(K:M)}\subseteq \sqrt{\sum_{i=1}^r Rs_i^n}.$$ Thus, there exists $h\in \mathbb{N}$ such that $(K:M)^h\subseteq \sum_{i=1}^r Rs_i^n$, and $$(K:M)^h\gamma \subseteq \left(\sum_{i=1}^r Rs_i^n\right)\gamma=\sum_{i=1}^r Rs_i^n\gamma\subseteq \epsilon_N^U(N).$$
\end{proof}

\begin{rem}\label{natpattat}
Let $h: L\rightarrow N$ be a homomorphism of $R$-modules and let $U$ be an open subset of $X$. Now $h$ induces an $R_\p$-homomorphism $h_\p: L_\p\rightarrow N_\p$, for each $\p\in \Supp(U)$. So, we have the induced $\coprod_{\p\in \Supp(U)} R_\p$-homomorphism $$\coprod_{\p\in \Supp(U)} h_\p: \coprod_{\p\in \Supp(U)} L_\p\longrightarrow \coprod_{\p\in \Supp(U)} N_\p.$$ Thus, $h$ induces the $\mathcal{A}(R, M)(U)$-homomorphism $$\mathcal{A}(h, M)(U): \mathcal{A}(L, M)(U)\longrightarrow \mathcal{A}(N, M)(U)$$ for which $$\mathcal{A}(h, M)(U)((\gamma_\p)_{\p\in \Supp(U)})=(h_\p\gamma_\p)_{\p\in \Supp(U)}$$ for all $(\gamma_\p)_{\p\in \Supp(U)}\in \mathcal{A}(L, M)(U)$.
 It is clear that $\mathcal{A}(id_L, M)(U)=id_{\mathcal{A}(L, M)(U)}$, and if $g: N\rightarrow B$ is another homomorphism of $R$-modules, then $\mathcal{A}(g\circ h, M)(U)=\mathcal{A}(g, M)(U)\circ \mathcal{A}(h, M)(U)$. Thus, $\mathcal{A}(\bullet, M)(U)$ is a covariant functor from the category of $R$-modules and $R$-homomorphisms to the category of $\mathcal{A}(R, M)(U)$-modules and $\mathcal{A}(R, M)(U)$-homomorphisms. Note that an $\mathcal{A}(R, M)(U)$-module can be regarded as an $R$-module by means of $\epsilon_R^U$. Moreover, the diagram
$$ \xymatrix{
&L\ar[rr]^h\ar[d]^{\epsilon_L^U}&&N\ar[d]^{\epsilon_N^U}&\\
& \mathcal{A}(L, M)(U)\ar[rr]^{\mathcal{A}(h, M)(U)}& & \mathcal{A}(N,M)(U)&\\}$$

commutes, and so $\epsilon^U : Id \longrightarrow \mathcal{A}(\bullet, M)(U)$ is a natural transformation of functors from the category of $R$-modules and $R$-homomorphisms to itself.
\end{rem}

The next theorem is one of the main results of this paper and shows that there is an isomorphism between the sections of the sheaf $\mathcal{A}(N, M)$ over the open set $U$ and  the ideal transform module.

\begin{thm}\label{mainloc}
Let $R$ be a Noetherian ring and let $N$ be an $R$-module. Set $U=X\setminus V(K)$, where $K\leq M$. There is a unique $R$-isomorphism $$f_{K,N}: \mathcal{A}(N,M)(U) \rightarrow  D_{(K:M)}(N):=\dlim \Hom( (K:M)^n, N)$$ such that the diagram
      $$ \xymatrix{
& N\ar[rr]^{\epsilon_N^U}\ar[drr]& & \mathcal{A}(N,M)(U)\ar[d]^{f_{K,N}}&\\
&&& D_{(K:M)}(N)&\\}$$ commutes. Moreover, if $h:L \rightarrow N$ is a homomorphism of $R$-modules, then the diagram
$$ \xymatrix{
&\mathcal{A}(L,M)(U)\ar[rr]^{\mathcal{A}(h,M)(U)}\ar[d]\ar[d]^{f_{K,L}} & & \mathcal{A}(N,M)(U)\ar[d]^{f_{K,N}}&\\
&D_{(K:M)}(L)\ar[rr]^{D_{(K:M)}(h)}&& D_{(K:M)}(N)&\\}$$ commutes, and so $f_{K}: \mathcal{A}(\bullet,M)(U) \rightarrow  D_{(K:M)}$ is a natural equivalence of functors from the category of $R$-modules and $R$-homomorphisms to itself.
\end{thm}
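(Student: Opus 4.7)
The plan is to deduce the theorem from the universal characterisation of the ideal transform. Recall (see \cite{sb98}) that for an ideal $I$ of a ring $R$ and an $R$-module $N$, the ideal transform $D_I(N)$ is characterised, up to unique isomorphism compatible with the canonical map $\eta_N : N \to D_I(N)$, as any $I$-torsion-free $R$-module $L$ equipped with an $R$-homomorphism $\phi : N \to L$ whose kernel and cokernel are both $I$-torsion. Thus the whole task reduces to verifying these three conditions for $I = (K : M)$, $L = \mathcal{A}(N, M)(U)$, and $\phi = \epsilon_N^U$.

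Each of the three conditions has already been established in the preparatory material: the kernel of $\epsilon_N^U$ is $(K : M)$-torsion by Proposition~\ref{kernel}; its cokernel is $(K : M)$-torsion by Proposition~\ref{20.1.10}; and the codomain $\mathcal{A}(N, M)(U)$ is itself $(K : M)$-torsion-free by Lemma~\ref{gammazero}. Feeding these into the universal property at once produces a unique $R$-isomorphism $f_{K, N} : \mathcal{A}(N, M)(U) \to D_{(K : M)}(N)$ with $f_{K, N} \circ \epsilon_N^U = \eta_N$, which is precisely the commutative triangle asserted.

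For the naturality square associated with an $R$-homomorphism $h : L \to N$, I would argue by a uniqueness principle rather than by unwinding definitions. Both $R$-homomorphisms $D_{(K:M)}(h) \circ f_{K, L}$ and $f_{K, N} \circ \mathcal{A}(h, M)(U)$, as maps from $\mathcal{A}(L, M)(U)$ to $D_{(K:M)}(N)$, become $\eta_N \circ h$ after precomposition with $\epsilon_L^U$: this is immediate from the defining equations of $f_{K, L}$ and $f_{K, N}$, naturality of $\eta$, and the commutative square in Remark~\ref{natpattat}. To conclude that the two composites coincide, it suffices to observe that any $R$-homomorphism $g : \mathcal{A}(L, M)(U) \to D_{(K:M)}(N)$ that vanishes on $\epsilon_L^U(L)$ is identically zero: for $x$ in the source, Proposition~\ref{20.1.10} supplies an integer $n$ with $(K:M)^n x \subseteq \epsilon_L^U(L)$, whence $(K:M)^n g(x) = 0$, and therefore $g(x) = 0$ because $D_{(K:M)}(N)$ is $(K:M)$-torsion-free. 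Applying this to the difference of the two composites delivers the square, and so $f_K$ is the asserted natural equivalence.

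The only delicate point I anticipate is invoking the universal property of the ideal transform in the precise torsion-theoretic form above, rather than the adjunction or the exact-sequence form $0 \to \Gamma_I(N) \to N \to D_I(N) \to H^1_I(N) \to 0$ that some sources prefer; these formulations are equivalent and the torsion-theoretic one is standard in \cite{sb98}. Beyond that citation, every substantive ingredient is already in hand, and the entire argument is essentially one invocation of the universal property together with the one-line ``torsion-kills-cokernel'' observation used above.
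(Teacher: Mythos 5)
Your construction of $f_{K,N}$ and your treatment of the naturality square are sound and essentially match the paper's strategy: the paper likewise obtains existence and uniqueness of $f_{K,N}$ from \cite[Corollary~2.2.13(ii)]{sb98} via Propositions~\ref{20.1.8iii} and~\ref{20.1.10}, gets injectivity from Lemma~\ref{20.1.8ii}, and settles naturality by the same ``agree after precomposition with $\epsilon_L^U$, then invoke uniqueness'' device. The gap is in the surjectivity of $f_{K,N}$. The universal characterisation you invoke is false as stated: an $I$-torsion-free module $L$ receiving a map $\phi\colon N\to L$ with $I$-torsion kernel and cokernel need not be $D_I(N)$; under those hypotheses the induced map $L\to D_I(N)$ is injective, but it is an isomorphism only if in addition $H^1_I(L)=0$. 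Concretely, take $R=k[x]$, $I=(x)$, $N=R$ and $L=x^{-1}R\subseteq R_x$ with $\phi$ the inclusion: the kernel is zero, the cokernel $x^{-1}R/R$ is killed by $x$, and $L\cong R$ is $I$-torsion-free, yet $L\neq R_x=D_I(R)$. In the present situation the missing hypothesis $H^1_{(K:M)}(\mathcal{A}(N,M)(U))=0$ is exactly Corollary~\ref{cor1d}(4), which the paper deduces \emph{from} Theorem~\ref{mainloc}, so assuming it here would be circular.

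The paper closes this gap with a genuine argument: given $y\in D_{(K:M)}(N)$ represented by some $h\in\Hom_R((K:M)^n,N)$, it exhibits an explicit preimage, namely the family $\delta=\bigl(h(s_\p^n)/s_\p^n\bigr)_{\p\in \Supp(U)}$ where $s_\p\in (K:M)\setminus\p$, checks that $\delta$ is locally given by a single fraction (hence lies in $\mathcal{A}(N,M)(U)$), and verifies $f_{K,N}(\delta)=y$ by showing $r f_{K,N}(\delta)=ry$ for all $r\in(K:M)^n$ and using that $D_{(K:M)}(N)$ has no $(K:M)$-torsion. Some such construction (or an independent proof that $H^1_{(K:M)}$ of the section module vanishes) is needed to complete your argument; the remaining parts of your proposal stand.
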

\begin{proof}
By Propositions \ref{20.1.8iii} and  \ref{20.1.10}, both the kernel and cokernel of $\epsilon_N^U$ are $(K:M)$-torsion. Therefore, there is a unique $R$-homomorphism $f_{K,N}: \mathcal{A}(N,M)(U) \rightarrow  D_{(K:M)}(N)$ such that the diagram $$ \xymatrix{& N\ar[rr]^{\epsilon_N^U}\ar[drr]& & \mathcal{A}(N,M)(U)\ar[d]^{f_{K,N}}&\\ &&& D_{(K:M)}(N)&\\}$$ commutes by \cite[Corollary 2.2.13(ii)]{sb98}.

By Lemma~\ref{20.1.8ii}, $\Gamma_{(K:M)}(\mathcal{A}(N,M)(U))=0$. So,  $f_{K,N}$ is injective by \cite[Corollary~2.2.13(ii)]{sb98}.

To show that $f_{K,N}$ is surjective, let $y\in D_{(K:M)}(N)$. Then there exist  $n\in \mathbb{N}$ and $h\in \Hom_R( (K:M)^n, N)$ such that $y$ is the natural image of $h$ in $D_{(K:M)}(N)$. For each $\p \in \Supp(U)$, there exists $s_{\p}\in (K:M)\setminus \p$. Hence, $$\delta:=\left( \frac{h(s_\p^n)}{s_\p^n}\right)_{\p\in \Supp(U)}\in \coprod_{\p\in \Supp(U)} N_\p.$$ Let $Q\in U$. Then for each $P\in U\setminus V(s_{(Q:M)}M)$ we have $$\frac{h(s_{(P:M)}^n)}{s_{(P:M)}^n}=\frac{s_{(Q:M)}^nh(s_{(P:M)}^n)}{s_{(Q:M)}^ns_{(P:M)}^n}=\frac{h(s_{(Q:M)}^ns_{(P:M)}^n)}{s_{(Q:M)}^ns_{(P:M)}^n}=\frac{s_{(P:M)}^nh(s_{(Q:M)}^n)}{s_{(P:M)}^ns_{(Q:M)}^n}=\frac{h(s_{(Q:M)}^n)}{s_{(Q:M)}^n}\in N_\p.$$ Thus, $\delta\in \mathcal{A}(N,M)(U)$. We claim that $f_{K, M}(\delta)=y$.

For each $r\in (K:M)^n$, we have $$r\delta:=\left( r\frac{h(s_\p^n)}{s_\p^n}\right)_{\p\in \Supp(U)}=\left( \frac{h(rs_\p^n)}{s_\p^n}\right)_{\p\in \Supp(U)}=\left( \frac{s_\p^nh(r)}{s_\p^n}\right)_{\p\in \Supp(U)}=\epsilon_N^Uh(r).$$ Hence, $rf_{K, M}(\delta)=f_{K, M}(r\delta)=f_{K, M}\epsilon_N^Uh(r)=\eta_{(K:M), N}(h(r))$, and this is just the natural image in $D_{(K:M)}(N)$ of the homomorphism $h'\in \Hom_R((K:M)^n, N)$ given by $h'(r')=r'h(r)=rh(r')$ for all $r'\in (K:M)^n$. Thus, $rf_{K, M}(\delta)=ry$ for all $r\in (K:M)^n$. Hence, $f_{K, M}(\delta)-y\in \Gamma_{(K:M)}(D_{(K:M)}(N))$, which is zero by \cite[Corollary 2.2.8(iv)]{sb98}. Therefore, $f_{K, M}$ is surjective, and so is isomorphism.

To prove the second part, it is enough to show that  in the diagram

$$\xymatrix{
&L\ar[rrr]^h\ar[dr]^{\epsilon_L^U}\ar[dddr]_{\eta_{(K:M), L}}& & & N\ar[dr]^{\epsilon_N^U}\ar[dddr]_{\eta_{(K:M), N}}\\
&& \mathcal{A}(L, M)(U)\ar[rrr]^{\mathcal{A}(h, M)(U)}\ar[dd]^{f_{K,L}}& && \mathcal{A}(N,M)(U)\ar[dd]^{f_{K,N}}&\\
&&&&&\\
&&D_{(K:M)}(L)\ar[rrr]^{D_{(K:M)}(h)}&&& D_{(K:M)}(N)&\\}$$
the front square commutes. In view of the first part of this proof we infer that
the two side triangles commute. Furthermore, the top square commutes by Remark~\ref{natpattat}, while the sloping rectangle on the underside commutes because $\eta_{(K:M)}$ is a natural transformation. Therefore,

\begin{eqnarray*}
  f_{K, N}\circ \mathcal{A}(h, M)(U)\circ \epsilon_L^U &=& f_{K, N}\circ \epsilon_N^U\circ h\\
   &=&\eta_{(K:M), N}\circ h\\
   &=&D_{(K:M)}(h)\circ \eta_{(K:M), L}\\
   &=& D_{(K:M)}(h)\circ f_{K, L}\circ \epsilon_L^U.
\end{eqnarray*}
However, by \cite[Corollary~2.2.11(ii)]{sb98}, Propositions~\ref{20.1.8iii} and \ref{20.1.10}, there is a unique $R$-homomorphism $h' : \mathcal{A}(L, M)(U) \longrightarrow D_{(K:M)}(N)$ such that the diagram
$$ \xymatrix{
&  L \ar[rr]^{\epsilon_L^U}\ar[d]^h & & \mathcal{A}(L,M)(U)\ar[d]^{h'}&\\
&N\ar[rr]^{\eta_{(K:M), N}} & & D_{(K:M)}(N)& \\}$$
commutes, and so $f_{K, N}\circ \mathcal{A}(h, M)(U)=D_{(K:M)}(h)\circ f_{K, L}$, as required.
\end{proof}

Theorem~\ref{mainloc} has useful consequences which help us to find more information about the sheaf $\mathcal{A}(N, M)$. We list some of these consequences in the following corollaries.

\begin{cor}\label{cor1d}
Let  $R$ be a Noetherian ring and let $N$ be an $R$-module. Set $U=X\setminus V(K)$, where $K\leq M$. Then the following hold:
\begin{enumerate}
  \item $\mathcal{A}(\Gamma_{(K:M)}(N), M)(U)=0$;
  \item $\mathcal{A}(N, M)(U)\cong \mathcal{A}(N/\Gamma_{(K:M)}(N), M)(U)$;
  \item $\mathcal{A}(N, M)(U)\cong \mathcal{A}(\mathcal{A}(N, M)(U), M)(U)$;
  \item $H^1_{(K:M)}(\mathcal{A}(N, M)(U))=0$;
  \item $H^i_{(K:M)}(N)\cong H^i_{(K:M)}(\mathcal{A}(N, M)(U)) $ for all $i>1$.
\end{enumerate}
\end{cor}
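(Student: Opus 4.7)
The plan is to use Theorem~\ref{mainloc} to translate each of the five assertions into a statement about the ideal transform functor $D_I$, where $I := (K:M)$, and then to invoke the standard identities for $D_I$ collected in \cite[Chapter 2]{sb98}. Since $f_K : \mathcal{A}(\bullet, M)(U) \to D_I$ is a natural equivalence on the whole category of $R$-modules, one may freely replace $\mathcal{A}(\bullet, M)(U)$ by $D_I$, and apply the result with $N$ replaced by $\Gamma_I(N)$, $N/\Gamma_I(N)$, or $\mathcal{A}(N, M)(U)$ itself.

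For (1), I would observe that $\Gamma_I(N)$ is $I$-torsion, and then use Proposition~\ref{20.1.8iii} and Proposition~\ref{20.1.10} to see that both the kernel and cokernel of $\epsilon^U_{\Gamma_I(N)}$ coincide with $\Gamma_I(N)$; since the target $\mathcal{A}(\Gamma_I(N), M)(U)$ is $I$-torsion-free by Lemma~\ref{20.1.8ii}, it must vanish. For (2), I would apply $D_I$ (equivalently $\mathcal{A}(\bullet, M)(U)$) to the short exact sequence $0 \to \Gamma_I(N) \to N \to N/\Gamma_I(N) \to 0$ and combine (1) with the vanishing $H^1_I(\Gamma_I(N)) = 0$ (an $I$-torsion module has vanishing higher local cohomology) to read off the desired isomorphism from the long exact sequence.

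For (3), the key point is that $D_I(N) \cong \mathcal{A}(N, M)(U)$ is itself $I$-torsion-free by Lemma~\ref{20.1.8ii}, so once (4) is in hand, applying Theorem~\ref{mainloc} to $\mathcal{A}(N, M)(U)$ in place of $N$ and invoking \cite[Corollary~2.2.13]{sb98} yields the reflexivity $D_I(D_I(N)) \cong D_I(N)$. For (4) and (5), I would transport everything through Theorem~\ref{mainloc} and exploit the fundamental four-term exact sequence
\[
0 \to \Gamma_I(N) \to N \to D_I(N) \to H^1_I(N) \to 0
\]
together with $H^i_I(\Gamma_I(N)) = 0$ for $i \geq 1$ to extract $H^1_I(D_I(N)) = 0$ and $H^i_I(D_I(N)) \cong H^i_I(N)$ for $i \geq 2$ from the associated long exact sequence of local cohomology.

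The only real obstacle is organisational: one must sequence the appeals so that (4) is established before (3), and one has to be careful that the natural transformation of Theorem~\ref{mainloc} is compatible with the connecting homomorphisms of the relevant long exact sequences — but this is guaranteed by the naturality statement in that theorem. Beyond this bookkeeping, no fresh computation is needed, as every required identity for $D_I$ and $H^i_I$ is recorded in \cite{sb98}.
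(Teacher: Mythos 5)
Your proposal is correct and follows essentially the same route as the paper: the paper's proof is simply ``Use Theorem~\ref{mainloc} and \cite[Corollary~2.2.8]{sb98}'', i.e.\ transport each assertion through the natural equivalence $\mathcal{A}(\bullet,M)(U)\cong D_{(K:M)}$ and invoke the standard identities for the ideal transform, which is exactly your plan. Your additional sketches of how those identities are derived (the four-term exact sequence, torsion/torsion-free arguments) merely unpack the content of the cited corollary.
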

\begin{proof}
Use Theorem~\ref{mainloc} and \cite[Corollary~2.2.8]{sb98}.
\end{proof}

\begin{cor}\label{cor2d}
Suppose that $R$ is a Noetherian ring. Set $U=X\setminus V(K)$, where $K\leq M$. Let $N$ be a $(K:M)$-torsion $R$-module. Then $\mathcal{A}(N, M)(U)=0$.
\end{cor}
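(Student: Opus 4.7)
The plan is to deduce this immediately from the machinery already established, since the assertion is essentially a reformulation of the $(K:M)$-torsion vanishing for the ideal transform, translated through the isomorphism of Theorem~\ref{mainloc}. I would first note that the hypothesis $N = \Gamma_{(K:M)}(N)$ reduces the problem to showing that $\mathcal{A}$ annihilates $(K:M)$-torsion modules on $U$.

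The quickest route is to invoke Corollary~\ref{cor1d}(1), which asserts $\mathcal{A}(\Gamma_{(K:M)}(N), M)(U) = 0$ without any torsion hypothesis on $N$. Since by assumption $N = \Gamma_{(K:M)}(N)$, we get
\[
\mathcal{A}(N, M)(U) = \mathcal{A}(\Gamma_{(K:M)}(N), M)(U) = 0,
\]
and we are done. This is a one-line argument and there is no real obstacle.

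Alternatively, and more conceptually, I would argue directly through Theorem~\ref{mainloc}: that result provides an $R$-isomorphism
\[
\mathcal{A}(N,M)(U) \cong D_{(K:M)}(N) = \dlim \Hom_R((K:M)^n, N).
\]
When $N$ is $(K:M)$-torsion, every homomorphism $h\colon (K:M)^n \to N$ has, for each generator $a$ of $(K:M)^n$, an annihilator $(K:M)^{m(a)}$ killing $h(a)$; since $R$ is Noetherian, $(K:M)^n$ is finitely generated, so some uniform power $(K:M)^{n+t}$ kills the image of $h$, forcing the restriction of $h$ to $(K:M)^{n+t}$ to be the zero map. Hence $h$ represents zero in the direct limit, and $D_{(K:M)}(N) = 0$. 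This is precisely the content of \cite[Corollary~2.2.8]{sb98} applied to the ideal transform.

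Either argument works; I would present the first since it uses only results from the present paper, reserving the second as a remark to emphasize the conceptual picture given by Theorem~\ref{mainloc}.
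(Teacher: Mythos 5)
Your first argument is exactly the paper's proof: the paper also deduces the corollary by applying Corollary~\ref{cor1d}(1) to the hypothesis $N=\Gamma_{(K:M)}(N)$. The alternative route via Theorem~\ref{mainloc} and the vanishing of $D_{(K:M)}$ on torsion modules is a correct but unnecessary elaboration, since Corollary~\ref{cor1d}(1) is itself derived from that theorem.
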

\begin{proof}
Use Corollary~\ref{cor1d}(1).
\end{proof}

\begin{eg}\label{gerdogholombeh}
Consider the $\mathbb{Z}$-modules $N=\mathbb{Z}/8\mathbb{Z}$ and $M= \mathbb{Q} \oplus (\oplus_{p\in \Omega} \frac{\mathbb{Z}}{p\mathbb{Z}})$, where $\Omega$ is the set of all prime integers. Let $q$ be a prime integer. Then $qM=\mathbb{Q} \oplus (\oplus_{p\in \Omega\setminus \{q\}} \frac{\mathbb{Z}}{p\mathbb{Z}})\neq M$. Hence, by \cite[Proposition 2]{lu84}, $qM$ is a $(q)$-prime submodule of $M$. Indeed, $qM$ is a maximal submodule of $M$ and moreover, $T(M)=(0)\oplus (\oplus_{p\in \Omega} \frac{\mathbb{Z}}{p\mathbb{Z}})$. Hence, by \cite[Lemma 4.5]{lu03}, $T(M)$ is a $(0)$-prime submodule of $M$. Therefore, $\Spec(M)=\{qM \,| \, q \text{ is a prime integer}\}\cup \{(0)\oplus (\oplus_{p\in \Omega} \frac{\mathbb{Z}}{p\mathbb{Z}})\}$. This shows that $M$ is primeful. Now let $K=\mathbb{Q} \oplus (\oplus_{p\in \Omega\setminus \{2\}} \frac{\mathbb{Z}}{p\mathbb{Z}})$ and $U=\Spec(M)\setminus V(K)$. Then by Corollary~\ref{cor2d}, $\mathcal{A}(\mathbb{Z}/8\mathbb{Z}, \mathbb{Q} \oplus (\oplus_{p\in \Omega} \frac{\mathbb{Z}}{p\mathbb{Z}}))(U)=0$.
\end{eg}

In the next corollary, we are going to obtain a particularly simple description of the section of the sheaf $\mathcal{A}(N, M)$ over the open set $U$, in terms of objects which are perhaps more familiar, in the case in which $R$ is $PID$.

\begin{cor}
Suppose that $R$ is a $PID$ and $N$  is an $R$-module. Set $U=X\setminus V(K)$, where $K\leq M$.    Then there exists $a\in R$ such that $\mathcal{A}(N, M)(U)=N_a$.
\end{cor}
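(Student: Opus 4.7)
The plan is to apply Theorem~\ref{mainloc} and then exploit the principal-ideal hypothesis. Since every PID is Noetherian, Theorem~\ref{mainloc} yields a natural $R$-isomorphism $\mathcal{A}(N,M)(U) \cong D_{(K:M)}(N)$, so it suffices to identify the ideal transform on the right with a localization of the form $N_a$. Because $R$ is a PID, I would choose $a\in R$ with $(K:M)=Ra$; if $a=0$, then $V(K)=X$, so $U=\emptyset$ and hence $\mathcal{A}(N,M)(U)=0$, and on the other side $0$ lies in the multiplicative set $\{1,a,a^2,\dots\}$ so $N_a=0$, and the claim holds trivially. We may therefore assume $a\neq 0$.

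Next, I would compute $D_{Ra}(N)=\varinjlim_{n}\Hom_R(Ra^n,N)$ explicitly. Because $R$ is an integral domain and $a\neq 0$, multiplication by $a^n$ is an injection $R\hookrightarrow R$ with image $Ra^n$, so the evaluation map $\varphi\mapsto \varphi(a^n)$ furnishes a natural isomorphism $\Hom_R(Ra^n,N)\cong N$. Under these identifications, the transition map induced by the inclusion $Ra^{n+1}\subseteq Ra^n$ corresponds to multiplication by $a$ on $N$, since the restriction of $\varphi$ to $Ra^{n+1}$ sends $a^{n+1}=a\cdot a^n$ to $a\varphi(a^n)$.

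Passing to the direct limit then gives
$$D_{Ra}(N)\cong \varinjlim\bigl(N\xrightarrow{\,a\,} N\xrightarrow{\,a\,} N\xrightarrow{\,a\,}\cdots\bigr)=N_a,$$
the right-hand side being the standard colimit description of the localization of $N$ at the powers of $a$. Composing with the isomorphism supplied by Theorem~\ref{mainloc} yields $\mathcal{A}(N,M)(U)\cong N_a$, as required. The only mildly delicate point is verifying that the identifications $\Hom_R(Ra^n,N)\cong N$ intertwine the restriction maps with multiplication by $a$, and this is precisely the step in which the integral-domain property of the PID $R$ (ensuring $R\cong Ra^n$ for $a\neq 0$) is used essentially.
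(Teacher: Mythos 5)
Your proposal is correct and takes essentially the same route as the paper: both apply Theorem~\ref{mainloc} to reduce the statement to the isomorphism $D_{Ra}(N)\cong N_a$ for a generator $a$ of $(K:M)$. The only difference is that the paper simply cites \cite[Theorem 2.2.16]{sb98} for that isomorphism, whereas you prove it directly via the colimit description $D_{Ra}(N)\cong\varinjlim(N\xrightarrow{a}N\xrightarrow{a}\cdots)\cong N_a$ (and you also dispose of the degenerate case $a=0$); your verification of the transition maps is accurate.
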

\begin{proof}
Since $R$ is a $PID$, there exists an element $a\in R$ such that $Ra=(K:M)$. By Theorem~\ref{mainloc} and \cite[Theorem 2.2.16]{sb98}, we have  $\mathcal{A}(N, M)(U)\cong D_{(K:M)}(N)\cong N_a$.
\end{proof}

Let $H$ be an $R$-module. An element $a\in R$ is said to be $H$-regular if $ax \not= 0$ for all $0\neq x\in H$. A sequence $a_1, \ldots, a_n$ of elements of $R$ is an $H$-sequence (or an $H$-regular sequence) if the following two conditions hold: (1) $a_1$ is $H$-regular, $a_2$ is $(H/a_1H)$-regular, $\ldots$, $a_n$ is $(H/\sum_{i=1}^{n-1}a_i H)$-regular; (2) $H/\sum_{i=1}^{n}a_i H\neq 0$.

\begin{cor}
Let $R$ be a Noetherian ring and   $N$ be an $R$-module. Set $U=X\setminus V(K)$, where $K\leq M$. Assume that $(K:M)$ contains an $N$-sequence of length 2. Then $\mathcal{A}(N,M)(U)\cong N $.
\end{cor}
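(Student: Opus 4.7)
My plan is to chain together Theorem~\ref{mainloc} with a standard fact about ideal transforms and regular sequences from Brodmann--Sharp. By Theorem~\ref{mainloc}, there is an $R$-isomorphism
\[
f_{K,N}: \mathcal{A}(N,M)(U) \stackrel{\cong}{\To} D_{(K:M)}(N),
\]
so it suffices to prove that $D_{(K:M)}(N) \cong N$ under the hypothesis that the ideal $I:=(K:M)$ contains an $N$-regular sequence of length $2$.

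Next, I would recall the fundamental exact sequence
\[
0 \To \Gamma_I(N) \To N \stackrel{\eta_{I,N}}{\To} D_I(N) \To H^1_I(N) \To 0
\]
(see, for instance, \cite[Corollary~2.2.8]{sb98}). So once I verify that $\Gamma_I(N) = 0$ and $H^1_I(N) = 0$, the natural map $\eta_{I,N}$ is itself an $R$-isomorphism, giving $N \cong D_I(N)$.

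To establish these two vanishings I would invoke the standard characterization (e.g. \cite[Theorem~6.2.7]{sb98}) which asserts that for a finitely generated ideal $I$ of $R$ and an $R$-module $N$, one has $H^i_I(N) = 0$ for all $i < t$ whenever $I$ contains an $N$-regular sequence of length $t$. Applied with $t = 2$ this yields $H^0_I(N) = \Gamma_I(N) = 0$ and $H^1_I(N) = 0$, which is exactly what is needed.

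Combining these ingredients, $\eta_{I,N}: N \to D_{(K:M)}(N)$ is an isomorphism, and composing with $f_{K,N}^{-1}$ gives $\mathcal{A}(N,M)(U) \cong N$. There is no real obstacle here: the whole argument is a bookkeeping exercise that routes the problem through Theorem~\ref{mainloc} and then applies a textbook vanishing theorem for local cohomology. The only thing to be careful about is that the hypothesis "$N$-sequence of length $2$" already encodes, through condition~(2) in the paper's definition, that $N \neq 0$ is not needed separately to conclude the isomorphism --- the conclusion $N \cong D_I(N)$ holds even if $N = 0$, trivially.
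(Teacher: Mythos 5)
Your proposal is correct and follows essentially the same route as the paper: the paper's proof is simply ``Use Theorem~\ref{mainloc} and \cite[Corollary 2.2.6]{sb98}'', where that corollary is precisely the statement that $\eta_{(K:M),N}\colon N\to D_{(K:M)}(N)$ is an isomorphism when $(K:M)$ contains an $N$-sequence of length $2$. You merely unpack that citation via the exact sequence $0\to\Gamma_I(N)\to N\to D_I(N)\to H^1_I(N)\to 0$ and the vanishing of $H^0_I(N)$ and $H^1_I(N)$, which is a valid (and standard) derivation of the same fact.
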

\begin{proof}
Use Theorem \ref{mainloc} and \cite[Corollary 2.2.6]{sb98}.
\end{proof}

\begin{cor}\label{corloc}
Let $R$ be a Noetherian ring. Set $U=X\setminus V(K)$, where $K\leq M$. Then for each $n\in \mathbb{N}$, the functors  $\mathcal{R}^n\mathcal{A}(\bullet,M)(U)$ and $H^{n+1}_{(K:M)}(\bullet)$ are naturally equivalent, where the functors $\mathcal{R}^n\mathcal{A}(\bullet,M)(U)$ are the right derived functors of $\mathcal{A}(\bullet,M)(U)$. In particular, for each $R$-module $N$, $$\mathcal{R}^n\mathcal{A}(N, M)(U) \cong H^{n+1}_{(K:M)}(N).$$
\end{cor}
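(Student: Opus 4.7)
The plan is to leverage Theorem~\ref{mainloc} to transfer the problem entirely into the setting of ideal transforms, where the relationship with local cohomology is standard and documented in \cite{sb98}.

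First, I would observe that Theorem~\ref{mainloc} gives a natural equivalence $f_K : \mathcal{A}(\bullet, M)(U) \longrightarrow D_{(K:M)}(\bullet)$ of covariant additive functors from the category of $R$-modules and $R$-homomorphisms to itself. A basic homological fact is that naturally equivalent additive functors have naturally equivalent right derived functors, since naturally equivalent functors send injective resolutions to naturally isomorphic complexes. Consequently, for each $n \in \mathbb{N}$ we obtain a natural equivalence $\mathcal{R}^n \mathcal{A}(\bullet, M)(U) \cong \mathcal{R}^n D_{(K:M)}(\bullet)$.

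Next, I would invoke the standard result from local cohomology theory (see \cite[Theorem~2.2.4]{sb98} and the surrounding discussion) which asserts that, for any ideal $I$ of a Noetherian ring $R$, the right derived functors of the ideal transform $D_I$ satisfy $\mathcal{R}^n D_I(\bullet) \cong H^{n+1}_I(\bullet)$ naturally for all $n \geq 1$. Specialising to $I = (K:M)$, we combine this with the previous step to deduce the required natural equivalence $\mathcal{R}^n \mathcal{A}(\bullet, M)(U) \cong H^{n+1}_{(K:M)}(\bullet)$. Evaluating at an $R$-module $N$ then gives the displayed isomorphism $\mathcal{R}^n \mathcal{A}(N, M)(U) \cong H^{n+1}_{(K:M)}(N)$.

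The step I expect to require the most care is the transfer of naturality across the equivalence $f_K$: one must confirm that the isomorphism of derived functors one obtains really is natural in $N$, which amounts to noting that the natural equivalence of Theorem~\ref{mainloc} together with the Brodmann--Sharp isomorphism is compatible on each injective resolution. Since both $D_{(K:M)}$ and $\mathcal{A}(\bullet, M)(U)$ are left exact (the latter being left exact as a consequence of Theorem~\ref{mainloc}), the standard machinery of derived functors applies without modification, and the proof reduces to citing the two results above.
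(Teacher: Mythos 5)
Your proposal is correct and takes essentially the same route as the paper, whose proof consists of citing Theorem~\ref{mainloc} together with \cite[Theorem 2.2.4]{sb98}. The extra detail you supply --- that naturally equivalent left exact additive functors have naturally equivalent right derived functors --- merely makes explicit what the paper leaves implicit.
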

\begin{proof}
Use Theorem \ref{mainloc} and \cite[Theorem 2.2.4]{sb98}.
\end{proof}

Finally, we introduce  a scheme over the prime spectrum of  the $R$-module $M$.   We recall that a scheme $Y$ is \emph{locally Noetherian} if it can be covered by open affine subsets $\Spec(A_i)$, where each $A_i$ is a Noetherian ring. Also, $Y$ is \emph{Noetherian} if it is locally Noetherian and quasi-compact (see \cite{hart}).

\begin{prop}\label{prop.mainprop}
Let $N$ be an $R$-module. For any element $f\in R$, the module $\mathcal{A}(N,M)(X_f)$ is isomorphic to the localized module $N_f$.
\end{prop}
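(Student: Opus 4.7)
The plan is to reduce the claim to Theorem~\ref{mainloc} by choosing $K:=fM$, so that $U=X\setminus V(fM)=X_f$. Under the Noetherian hypothesis on $R$ (inherited from the standing context of the preceding corollaries in this section), that theorem supplies a canonical $R$-isomorphism $\mathcal{A}(N,M)(X_f)\cong D_{(fM:M)}(N)$, and the problem is thus transferred to identifying the ideal transform $D_{(fM:M)}(N)$ with the ordinary localization $N_f$.

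The key technical step will be the radical identity $\sqrt{(fM:M)}=\sqrt{Rf}$. The inclusion $\sqrt{Rf}\subseteq\sqrt{(fM:M)}$ is immediate from $f\in(fM:M)$. For the reverse, I would argue exactly as in the last part of the proof of Proposition~\ref{kernel}: given $r\in(fM:M)$ and a prime ideal $\p$ containing $f$, the faithful primeful hypothesis on $M$ supplies some $P\in X$ with $(P:M)=\p$; then $f\in\p=(P:M)$ yields $fM\subseteq P$, so $rM\subseteq fM\subseteq P$, and therefore $r\in(P:M)=\p$. Hence $r\in\sqrt{Rf}$.

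With the radical equality in hand, I would invoke the standard observation that the ideal transform $D_I(N)$ is an invariant of $\sqrt{I}$: the towers $\{(fM:M)^n\}_{n\in\mathbb{N}}$ and $\{(Rf)^n\}_{n\in\mathbb{N}}$ are mutually cofinal, so the two directed systems of $\Hom_R(-,N)$ share the same direct limit, which gives $D_{(fM:M)}(N)\cong D_{Rf}(N)$. Finally, \cite[Theorem 2.2.16]{sb98} identifies $D_{Rf}(N)$ with $N_f$, and the chain $\mathcal{A}(N,M)(X_f)\cong D_{(fM:M)}(N)\cong D_{Rf}(N)\cong N_f$ closes the argument. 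I expect the main obstacle to be the bookkeeping step of recording that $D_{(\bullet)}(N)$ depends only on the radical of the defining ideal; everything else is just an assembly of results already available in the excerpt.
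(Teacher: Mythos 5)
Your reduction to Theorem~\ref{mainloc} is mathematically sound as far as it goes: the radical identity $\sqrt{(fM:M)}=\sqrt{Rf}$ is correctly extracted from the faithful primeful hypothesis, the mutual cofinality of the towers $\{(fM:M)^n\}$ and $\{(Rf)^n\}$ does give $D_{(fM:M)}(N)\cong D_{Rf}(N)$ over a Noetherian ring, and \cite[Theorem 2.2.16]{sb98} then identifies the latter with $N_f$ (this is exactly how the paper handles the PID corollary). The genuine problem is the Noetherian hypothesis you claim to inherit: there is no such standing assumption. The only blanket hypotheses in force at this point are that $M$ is a non-primeless, faithful, primeful $R$-module; Noetherianness of $R$ is written explicitly into each statement that needs it (Lemma~\ref{firstlemma}, Proposition~\ref{kernel}, Theorem~\ref{mainloc} and its corollaries) and is deliberately absent from Proposition~\ref{prop.mainprop}. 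Your argument uses it twice --- once to invoke Theorem~\ref{mainloc} at all, and once for the cofinality step, which needs some power of $\sqrt{Rf}$ to land inside $(fM:M)$ --- so what you prove is the proposition under an extra hypothesis. This is not harmless: Theorem~\ref{scheme} applies Proposition~\ref{prop.mainprop} to conclude that $(X,\mathcal{A}_X)$ is a scheme for an arbitrary commutative ring $R$, reserving Noetherianness only for the final ``Noetherian scheme'' clause.

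For contrast, the paper's proof is a direct adaptation of the classical argument that $\mathcal{O}_{\Spec(R)}(D_f)\cong R_f$: it defines $\Theta:N_f\rightarrow\mathcal{A}(N,M)(X_f)$ by $a/f^m\mapsto(a/f^m)_{\p\in\Supp(X_f)}$, proves injectivity by using surjectivity of $\psi:\Spec(M)\rightarrow\Spec(R)$ (faithful primeful) to convert $\Supp(X_f)\subseteq D(I)$ into $f\in\sqrt{I}$, and proves surjectivity by a partition-of-unity argument over a finite subcover of $X_f$ by basic open sets $X_{k_i}$, the finiteness coming from quasi-compactness of $D_f$ in $\Spec(R)$ rather than from any chain condition. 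If you wish to keep the ideal-transform route you must either add ``$R$ Noetherian'' to the statement and accept the weaker result, or replace the appeal to Theorem~\ref{mainloc} by a direct argument of the paper's type.
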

\begin{proof}
We define the map $\Theta : N_f\rightarrow \mathcal{A}(N,M)(X_f)$ by $\frac{a}{f^m}\mapsto (\frac{a}{f^m})_{\p\in \Supp(X_f)}$. Indeed,  $\Theta$ takes $\frac{a}{f^m}$ to the $\frac{a}{f^m}\in N_\p$, for each $\p\in \Supp(X_f)$. It is easy to see that $\Theta$ is a well-defined homomorphism. We claim that $\Theta$ is an isomorphism.

If $\Theta(\frac{a}{f^n})=\Theta(\frac{b}{f^m})$, then for every $P\in X_f$, $\frac{a}{f^n}$ and $\frac{b}{f^m}$ have the same image in $N_\p$, where $\p=(P:M)$. Thus, there exists $h\in R\setminus \p$ such that $h(f^m a-f^n b)=0$ in $N$. Let $I=(0:_R f^m a-f^n b)$. Then $h\in I$ and $h\not\in \p$, so $I\nsubseteq \p$. This happens for any $P\in X_f$, so we conclude that $$V(I)\cap \Supp(X_f)=\emptyset.$$ Hence, $$\Supp(X_f)\subseteq D(I):=\Spec(R)\setminus V(I).$$ Since $M$ is faithful primeful, $f\in \sqrt{I}$ and so, $f^l \in I$ for some positive integer $l$. Now, we have $f^l(f^m a-f^n b)=0$ which shows that $\frac{a}{f^n}=\frac{b}{f^m}$ in $N_\p$. Therefore,  $\Theta$ is injective.

Let $(\gamma_\p)_{\p\in\Supp(X_f)}\in \mathcal{A}(N,M)(X_f)$. Then we can cover $X_f$ with open subsets $V_i$, on which $\gamma_{(P:M)}$ is represented by $\frac{a_i}{g_i}$,  with $g_i\not\in (P:M)$ for all $P\in V_i$, in other words $V_i \subseteq X_{g_i}$. By \cite[Proposition 4.3]{lu99}, the open sets in the form $X_h$ form a base for the topology. So, we may assume that $V_i=X_{h_i}$ for some $h_i\in R$. Since $X_{h_i}\subseteq X_{g_i}$, we have $h_i\in \sqrt{Rg_i}$. Thus $h_i^n\in Rg_i$ for some $n\in \mathbb{N}$. So, $h_i^n=cg_i$ and $$\frac{a_i}{g_i}=\frac{ca_i}{cg_i}=\frac{ca_i}{h_i^n}.$$
We see that $\gamma_{(P:M)}$ is represented by $\frac{b_i}{k_i}$, ($b_i=ca_i, k_i=h_i^n$) on $X_{k_i}$  and
$X_f$ is covered by the open subsets $X_{k_i}$. The open cover $X_f=\bigcup X_{k_i}$ has a finite subcover. Suppose that, $X_f\subseteq X_{k_1}\cup\cdots\cup X_{k_n}$. For $1\leq i, j\leq n$,  $\frac{b_i}{k_i}$ and $\frac{b_j}{k_j}$ both represent $\gamma_{(P:M)}$ on $X_{k_i}\cap X_{k_j}$. By \cite[Corollary 4.2]{lu99}, $X_{k_i}\cap X_{k_j}=X_{k_ik_j}$ and by injectivity of $\Theta$, we get $\frac{b_i}{k_i}=\frac{b_j}{k_j}$ in $N_{k_ik_j}$. Hence, for some $n_{ij}$, $$(k_i k_j)^{n_{ij}}(k_jb_i - k_ib_j)= 0.$$ Let $m=\max\{n_{ij}| 1\leq i, j\leq n\}$. Then $$k_j^{m+1}(k_i^mb_i)-k_i^{m+1}(k_j^mb_j)=0.$$  Replacing each $k_i$ by $k_i^{m+1}$, and $b_i$ by $k_i^mb_i$, we still see that $\gamma_{(P:M)}$ is represented on $X_{k_i}$ by $\frac{b_i}{k_i}$, and furthermore, we have $k_jb_i=k_ib_j$ for all $i, j$.  Moreover, $$D_f=\psi(X_f)\subseteq\bigcup_{i=1}^n \psi(X_{k_i})=\bigcup_{i=1}^n D_{k_i},$$ where $\psi$ is the natural map $\psi:\Spec(M)\rightarrow \Spec(R)$.  So, there are $c_1, \cdots, c_n$ in $R$ and $t\in \mathbb{N}$ such that $f^t=\sum_i c_ik_i$. Let $a=\sum_i c_i b_i$. Then for each $j$ we have $$k_ja=\sum_i c_i b_i k_j=\sum_i c_i k_i b_j=b_jf^t.$$ This implies that $\frac{a}{f^t}=\frac{b_j}{k_j}$ on $X_{k_j}$. Therefore, $\Theta(\frac{a}{f^t})=\gamma_{(P:M)}$ everywhere, which shows that $\Theta$ is surjective. This completes the proof.
\end{proof}

Let $N$ be an $R$-module. Then Proposition~\ref{prop.mainprop} shows that  the global sections of $\mathcal{A}(N,M)$ recover $N$.

\begin{thm}\label{scheme}
Let $X$ be a $T_0$-space. Then $(X, \mathcal{A}_{X})$ is a scheme. Moreover, if $R$ is Noetherian, then $(X, \mathcal{A}_{X})$ is a Noetherian scheme.
\end{thm}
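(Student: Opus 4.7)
My strategy is to show that $(X,\mathcal{A}_X)$ is isomorphic, as a locally ringed space, to the affine scheme $(\Spec(R),\mathcal{O}_{\Spec(R)})$; the scheme (and then Noetherian) conclusions follow at once. The candidate comparison map is the natural $\psi : X \to \Spec(R)$, $P \mapsto (P:M)$, already in play throughout the paper.

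First I would verify that $\psi$ is a homeomorphism. Surjectivity is the primeful hypothesis together with the faithfulness of $M$ (so that $\Spec(R/\Ann(M)) = \Spec(R)$), while injectivity is exactly the $T_0$ hypothesis via Remark~\ref{t00p}. For bicontinuity, note that $\psi^{-1}(D_f) = X_f$ for every $f \in R$, and, by bijectivity, also $\psi(X_f) = D_f$. Since $\{X_f\}_{f \in R}$ and $\{D_f\}_{f \in R}$ are bases for the respective topologies, $\psi$ is both continuous and open, hence a homeomorphism.

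Next I would construct the sheaf isomorphism $\psi_*\mathcal{A}_X \xrightarrow{\sim} \mathcal{O}_{\Spec(R)}$. Applying Proposition~\ref{prop.mainprop} with $N=R$ produces a natural isomorphism $\mathcal{A}_X(X_f) \cong R_f = \mathcal{O}_{\Spec(R)}(D_f)$ on the distinguished basis, sending $a/f^n$ to the constant family $(a/f^n)_{\p \in \Supp(X_f)}$. These basis-level maps are compatible with the restriction maps coming from inclusions $X_g \subseteq X_f$ (both sides are induced by the canonical localization $R_f \to R_g$, once one unwinds the identification $X_g \subseteq X_f \Leftrightarrow g \in \sqrt{Rf}$), so they glue to a morphism of sheaves of rings on the base and then uniquely to all open sets. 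The induced stalk maps at a point $P \in X$ with $\p = (P:M)$ identify $\mathcal{A}_{X,P} \cong R_\p$ with $\mathcal{O}_{\Spec(R),\p} = R_\p$ by Corollary~\ref{cor.stalk1}, and these are manifestly local ring homomorphisms. Hence $(X,\mathcal{A}_X) \cong (\Spec(R),\mathcal{O}_{\Spec(R)})$ as locally ringed spaces, so $(X,\mathcal{A}_X)$ is an affine scheme, and in particular a scheme.

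For the final clause, if $R$ is Noetherian, then $(\Spec(R),\mathcal{O}_{\Spec(R)})$ is a Noetherian scheme (it is covered by the single affine chart $\Spec(R)$ with $R$ Noetherian, and is quasi-compact), and the isomorphism above transports this property to $(X,\mathcal{A}_X)$. I expect the only genuine technical point to be the verification that the Proposition~\ref{prop.mainprop} isomorphisms on the basic opens $X_f$ commute with the restriction maps on overlaps $X_g \subseteq X_f$; once this is in hand, assembling the result is routine bookkeeping together with direct appeals to Corollary~\ref{cor.stalk1} and Remark~\ref{t00p}.
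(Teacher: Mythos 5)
Your proposal is correct, and it uses the same two ingredients as the paper --- the map $\psi$ restricted to basic opens being a homeomorphism (via the $T_0$ hypothesis, Remark~\ref{t00p}, and the standing faithful primeful assumption), and Proposition~\ref{prop.mainprop} identifying sections over $X_f$ with $R_f$ --- but you package them differently. The paper covers $X$ by the basic opens $X_{g_i}$, shows each $X_{g_i}$ is homeomorphic to $D_{g_i}\cong\Spec(R_{g_i})$, and concludes each is an affine chart; you instead take the single global map $\psi: X\to\Spec(R)$ and upgrade it to an isomorphism of locally ringed spaces $(X,\mathcal{A}_X)\cong(\Spec(R),\mathcal{O}_{\Spec(R)})$. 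Your route buys two things. First, it yields the strictly stronger conclusion that $(X,\mathcal{A}_X)$ is an \emph{affine} scheme, from which both claims of the theorem (scheme; Noetherian scheme when $R$ is Noetherian) fall out immediately. Second, it is more careful at the sheaf level: the paper's proof only invokes the isomorphism of \emph{sections} $\mathcal{A}_X(X_{g_i})\cong R_{g_i}$, whereas identifying $(X_{g_i},\mathcal{A}_X|_{X_{g_i}})$ with $\Spec(R_{g_i})$ as a locally ringed space really requires checking that these isomorphisms are compatible with restriction on the basis and induce local isomorphisms on stalks --- exactly the verification you flag and carry out using Corollary~\ref{cor.stalk1}. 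So your argument fills in a step the paper leaves implicit. (Note that the paper's own covering argument with $g=1$ collapses to your global statement, so the two proofs are ultimately the same computation viewed at different scales.)
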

\begin{proof}
By Corollary~\ref{cor.stalk1}, $(X, \mathcal{A}_{X})$ is a locally ringed space. Let $g\in R$. Since the natural map $\psi:\Spec(M)\rightarrow \Spec(R)$ is continuous by \cite[Proposition 3.1]{lu99}, the map $\psi|_{X_{g}}:X_{g}\rightarrow \psi(X_{g})$ is also continuous. By assumption and Remark \ref{t00}, $\psi|_{X_{g}}$ is a bijection. Let $E$ be a closed subset of $X_g$. Then  $E=X_g\cap V(N)$ for some submodule $N$ of $M$. Hence, $\psi(E)=\psi(X_g\cap V(N))=\psi(X_g)\cap V(N:M)$ is a closed subset of $\psi(X_g)$. Therefore, $\psi|_{X_g}$ is a homeomorphism.

Suppose that $X=\bigcup_{i\in I}X_{g_i}$. Since $M$ is faithful primeful and $X$ is a $T_0$-space, for each $i\in I$, $$X_{g_i}\cong \psi(X_{g_i})=\Supp(X_{g_i})=D_{g_i}\cong \Spec(R_{g_i}).$$ Thus, by Proposition~\ref{prop.mainprop}, $X_{g_i}$ is an affine scheme and this implies that $(X, \mathcal{A}_{X})$ is a scheme. For the last statement,  since  $R_{g_i}$ is Noetherian for each $i\in I$,  $(X, \mathcal{A}_{X})$ is a locally Noetherian scheme. Moreover, $X$ is a Noetherian topological space (and so is quasi-compact) by \cite[Corollary 3.25]{abbhass2012}. Therefore, $(X, \mathcal{A}_{X})$ is a Noetherian scheme.
\end{proof}

\section*{ACKNOWLEDGMENT}
The authors specially thank the referee for the helpful suggestions and comments.

\providecommand{\bysame}{\leavevmode\hbox to3em{\hrulefill}\thinspace}
\providecommand{\MR}{\relax\ifhmode\unskip\space\fi MR }
\providecommand{\MRhref}[2]{%
  \href{http://www.ams.org/mathscinet-getitem?mr=#1}{#2}
}
\providecommand{\href}[2]{#2}

\end{document}